\newtheorem{Theorem}{Theorem}
\newtheorem{Proposition}{Proposition}
\newtheorem{Lemma}{Lemma}
\newtheorem{Problem}{Problem}
\newtheorem{Remark}{Remark}
\newtheorem{Assumption}{Assumption}
	\tikzstyle{frame} = [draw, -latex]
	\tikzstyle{line} = [draw]
	\tikzstyle{line2} = [draw, dashdotted]
	\tikzstyle{line3} = [draw, dashed]
	\tikzstyle{line3UD} = [draw, dashed]
	\tikzstyle{place} = [circle, draw=black, fill=white, thick, inner sep=2pt, minimum size=1mm]
	\tikzstyle{place2} = [circle, draw=black, fill=black, thick, inner sep=2pt, minimum size=1mm]
	\tikzstyle{placeRed} = [circle, draw=red, fill=red, thick, inner sep=2pt, minimum size=1mm]
	\tikzstyle{vertex} = [circle, draw=black, fill=black, thick, inner sep=2pt, minimum size=1mm]
\def\algbackskip{\hskip-\ALG@thistlm}
\title{Distributed solution methods for MPC based energy management method of interconnected microgrids: dual ascent vs ADMM}
\author{Viet Hoang Pham and Hyo-Sung Ahn 
}
\begin{document}
\maketitle 
\thispagestyle{empty}
\pagestyle{empty}
\begin{abstract}
This paper considers an optimal energy management problem for a network of interconnected microgrids. A model predictive control (MPC) approach is used to avoid capacity constraint violation and to cope with uncertainties of forecasted power demands.
By employing a dual ascent method and a proximal alternative direction multiplier method (ADMM), respectively, two distributed methods are designed to allow every agent using only local information to determine its own optimal control decisions.
The effectiveness of the proposed method is verified via numerical simulations.
\end{abstract}
\section{Introduction}
With the rising power demands and the preferred use of renewable and clean energy resources, a power network consisting of multiple power nodes becomes a desired structure in power systems \cite{MLTuballa2016, XinghuoYu2016}. Each node may consist of power generators, power consumers, and energy storage units. Many strategies to optimize power distribution and transmission have been proposed and inquired for this power network \cite{GuillermoIrisarri1998, XSHan2001, XSHan2007, ShipingYang2013, AshishCherukuri2015, YinliangXu2015, RuiWang2018, HyoSungAhn2018}.
The economic dispatch problem (EDP) is a crucial topic in managing power networks. The main objective is to find the optimal power distribution among generators, which minimizes the total generation cost while satisfying limitation constraints and balancing the power demand and supply \cite{XSHan2001, ShipingYang2013, AshishCherukuri2015, YinliangXu2015, RuiWang2018}. Besides that, constraints on power transmission lines are considered to guarantee feasible flow among power nodes \cite{GuillermoIrisarri1998, XSHan2007}.
In \cite{HyoSungAhn2018, VietHoangPham_ICCAIS2022}, the power transmission cost is considered in coordination with the power distribution problem. In addition, to enhance the flexibility of the power network, some nodes have energy storage units \cite{DiWu2017, TaoYang2019}. Due to the geographical spread of the power network, the distributed control setup has been more desired \cite{ShipingYang2013, AshishCherukuri2015, YinliangXu2015, RuiWang2018, HyoSungAhn2018, DiWu2017, TaoYang2019}.

Short-term power load forecasting \cite{SeyedehNarjesFallah2019}, which focuses on hours-ahead prediction, is essential in managing a power network \cite{JiayuHan2021}. However, power demands can not be predicted precisely due to the fluctuation of historical data and unpredictable events. Model predictive control (MPC) \cite{SJoeQin2003} approach has been successfully applied in various areas such as the process control industry, traffic control, and recently in power networks \cite{XSHan2001, XSHan2007, YinliangXu2015}. The formulation of a finite horizon optimal control problem using updated states and prediction information at each sampling instant and the repetition over time can help the power network operate effectively even the uncertainties in power demand forecasting.

In this paper, we formulate the energy management problem of a power network in the MPC approach to find the optimal plan of power generation, transmission and energy storage operation. Due to the uncertainties in forecasting, the power demands forecasted can not be predicted precisely. Instead, we assume that their expectations and variations can be found with high confidence. Assuming that each power node has one local controller and consider this controller as an agent in a multiagent system. The main target of this paper is to design two distributed control methods for each agent using only local information to determine its own optimal control decisions. The first distributed method is based on a dual ascent method. The second one is based on a proximal ADMM. 

The rest of this paper is organized as follows. Section II introduces the power network and formulates the coordination problem of power distribution and transmission in the MPC approach. Then the distributed control problem is stated in Section III. We design two distributed methods for each local agent to find its optimal control decisions in Section IV and Section V. In Section VI, the effectiveness of these proposed methods is verified by numerical simulations for a modified IEEE 30-bus test network. Finally, Section VI provides the conclusion.
\subsection{Notations}
We use $\mathbb{R}$, $\mathbb{R}^n$, and $\mathbb{R}^{m \times n}$ to denote the set of real numbers, the set of $n$-dimension real vectors, and the set of $m \times n$ matrices, respectively. The set of all $n$-dimensional vector whose elements are all non-negative real numbers is denoted by $\mathbb{R}_{+}^n$.
Let $\textbf{1}_{n}$ and $\textbf{0}_{n}$ be the vectors in $\mathbb{R}^{n}$ whose all elements are $1$ and $0$, respectively. We use $\textbf{I}_{n}$ to denote the identity matrix in $\mathbb{R}^{n \times n}$. When the dimension $n$ is clear, we can ignore it and write the vectors and matrix as $\textbf{1}, \textbf{0}$ and $\textbf{I}$.
Given a matrix $\textbf{H}$ and a vector $\textbf{h}$, $\textbf{H}^T$ denotes the transpose of $\textbf{H}$, $[\textbf{H}]_{ij}$ is the entry in row $i$ and column $j$ of the matrix $\textbf{H}$, and $[\textbf{h}]_i$ denotes the $i^{th}$-element of $\textbf{h}$. Let $\rho_{min}(\textbf{H})$ and $\rho_{max}(\textbf{H})$ present the smallest and largest nonzero singular values of the matrix $\textbf{H}$, respectively. We use $\otimes$ to denote Kronecker product.

For a given set $\mathcal{H}$, $|\mathcal{H}|$ represents the cardinality of this set. Let $\mathcal{H} = \{\textbf{h}_1, \textbf{h}_2, \dots, \textbf{h}_n\}$ be a set of $n$ vectors, we use $col \mathcal{H}$ to denote the column vector
\[\textrm{col}\mathcal{H} = \textrm{col}\{\textbf{h}_1, \textbf{h}_2, \dots, \textbf{h}_n\} = [\textbf{h}_1^T, \textbf{h}_2^T, \dots, \textbf{h}_n^T]^T.\]
When the set $\mathcal{H}$ has a finite number of matrices, i.e., $\mathcal{H} = \{\textbf{H}_1, \textbf{H}_2, \dots, \textbf{H}_n\}$, we use $\textrm{blkcol}\mathcal{H}$ to denote the following matrix
\[\textrm{blkcol}\mathcal{H} = \left[\begin{matrix}\textbf{H}_1\\ \textbf{H}_2\\ \vdots\\ \textbf{H}_N\end{matrix}\right]\]
For two vectors $\textbf{h} = \textrm{col}\bigl\{h_r: 1 \le r \le n\bigr\}$ and $\textbf{g} = \textrm{col}\bigl\{g_r: 1 \le r \le n\bigr\}$, we have $\textbf{h} \ge \textbf{g}$ if $h_r \ge g_r$ for all $r \in [1,n]$.
The Euclidean of the vector $\textbf{h}$ is defined as $||\textbf{h}|| = \sqrt{\textbf{h}^T\textbf{h}} = \sqrt{\sum_{r = 1}^{n}h_r^2}$. The infinity norm (or maximum norm) of the vector $\textbf{h}$ is $||\textbf{h}||_{\infty} = \max\{|h_r|: r \in [1,n]\}$.
Let $[x]_+ = max\{0,x\} = \left\{\begin{matrix} x & \textrm{if } x \ge 0,\\ 0 & \textrm{otherwise} \end{matrix}\right.$ where $x \in \mathbb{R}$. We define the operator $[\textbf{x}]_+$ for a vector $\textbf{x} = \textrm{col}\bigl\{x_r: 1 \le r \le n\bigr\}$ as the following equation \[[\textbf{x}]_+ = \textrm{col}\bigl\{[x_r]_+: 1 \le r \le n\bigr\}.\]

Denote by $Expt[X]$ and $Var[X]$ the expectation and variance of a random variable $X$, respectively.

Let $\phi: \mathcal{X} \rightarrow \mathbb{R}$ be a function where $\mathcal{X} \subseteq \mathbb{R}^n$ is its domain.
We use $\overline{\eth_{\phi}}(\textbf{x})$ to denote the set of subgradients of the function $\phi$ at the given point $\textbf{x}$.
The function $\phi$ is convex if there exists a constant $\sigma_{\phi} \ge 0$ satisfying that, for any $\partial \phi(\textbf{x}) \in \overline{\eth_{\phi}}(\textbf{x})$, $\partial \phi(\textbf{y}) \in \overline{\eth_{\phi}}(\textbf{y})$, we have
\begin{equation}\label{eq_convex_def}
(\partial \phi(\textbf{x}) - \partial \phi(\textbf{y}))^T(\textbf{x} - \textbf{y}) \ge \sigma_{\phi}||\textbf{x} - \textbf{y}||^2
\end{equation}
The parameter $\sigma_{\phi}$ is called the convexity modulus.
If $\sigma_{\phi} > 0$, then $\phi$ is called a strongly convex function.
When the function $\phi$ is continuously differentiable, it has the unique gradient $\nabla \phi(\textbf{u}) \in \overline{\eth_{\phi}}(\textbf{u})$.
The function $\phi$ is called a smooth function if it has a Lipschitz gradient constant $L_{\phi}$ such that
\begin{equation}\label{eq_smooth_def}
||\nabla\phi(\textbf{x}) - \nabla\phi(\textbf{x})|| \le L_{\phi}||\textbf{x} -  \textbf{y}||, \forall \textbf{x}, \textbf{y} \in \mathcal{X}.
\end{equation}
\section{MPC energy management problem}
\subsection{Network of interconnected microgrids}
Consider a network of $N$ interconnected microgrids where each of them consists of power consumers and can transmit power to some others. In addition, some of these microgrids have either or both power generators and energy storage units.
We use a graph $\mathcal{G} = (\mathcal{V}, \mathcal{E})$ to represent the network where ${\mathcal{V}} = \{1, 2, \dots, N\}$ is the node set and $\mathcal{E}$ is the edge set.
Each node in the set $\mathcal{V}$ represents a microgrid and an edge $(i,j) \in \mathcal{E} \subset \mathcal{V} \times \mathcal{V}$ describes a power transmission line between two microgrids $i$ and $j$.
Fig. \ref{fig_interconnected_microgrids} illustrates a network of $5$ interconnected microgrids.
In this paper, we assume that the represented graph $\mathcal{G}$ is undirected and connected. That means the fact $(i,j) \in \mathcal{E}$ implies $(j,i) \in \mathcal{E}$, i.e., power can be transmitted from the microgrid $i$ to the microgrid $j$, and vice versa. In addition, for any pair $i,j \in \mathcal{V}$, there exists at least one path $k_1, k_2, \dots, k_r$ where $k_1 = i, k_r = j$ and $(k_{m},k_{m+1}) \in {\mathcal{E}}, \forall 1 \le m \le r-1$.
\begin{Assumption}\label{aspt_1}
$\mathcal{G}$ is an undirected and connected graph.
\end{Assumption}
\begin{figure}
\begin{center}
\includegraphics[width=0.45\textwidth, angle=0]{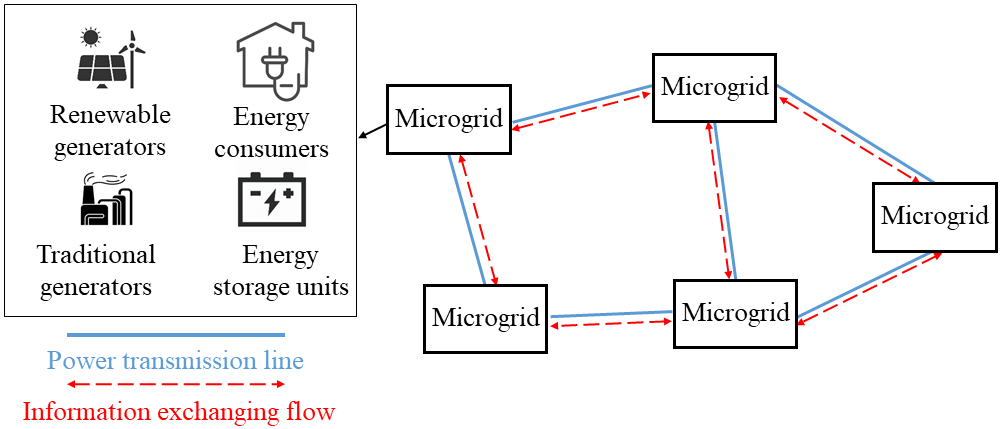}
\caption{A network of $5$ interconnected microgrids.}\label{fig_interconnected_microgrids}
\end{center}
\end{figure}
Moreover, we denote by $\mathcal{V}^G \subset \mathcal{V}$ the set of all microgrids which have power (traditional/renewable) generators and denote by $\mathcal{V}^E \subset \mathcal{V}$ the set of all microgrids which have energy storage units. In general, $\mathcal{V}^G \cap \mathcal{V}^E \neq \emptyset$.
Let $P_i(t)$ be the total power generated by power generators in the microgrid $i \in \mathcal{V}^G$ for the time interval $[tT, (t+1)T]$ where $t$ is the time index, and $T$ is the sampling time. 
Due to the limitation of energy resources, the generated power of the microgrid $i \in \mathcal{V}^G$ is restricted in a feasible range. As the capacities of power generators using renewable energy resources, such as solar and wind, depend highly on unpredictable conditions (e.g., weather), their feasible ranges may vary. Denote by $\overline{P_i}(t)$ the upper bound capacity of power generators in the microgrid $i \in \mathcal{V}^G$ in the time interval $[tT, (t+1)T]$.
We have
\begin{equation}\label{eq_generation_range}
0 \le P_i(t) \le \overline{P_i}(t), \forall i \in \mathcal{V}^G.
\end{equation}
In addition, the change of the generated powers of microgrids are restricted by the following ramp constraints.
\begin{equation}\label{eq_generation_ramp}
\underline{R_i} \le P_i(t) - P_i(t-1) \le \overline{R_i}, \forall i \in \mathcal{V}^G,
\end{equation}
where $\underline{R_i}$ and $\overline{R_i}$ are bounds of the ramp rates in the microgrid $i \in \mathcal{V}^G$, respectively.
Let $E_i^c(t)$ and $E_i^d(t)$ be the power delivered to or by the energy storage unit of the microgrid $i \in \mathcal{V}^E$.
The dynamics of the energy storage unit in microgrid $i \in \mathcal{V}^E$ is described as
\begin{equation}\label{eq_storage_unit}
SoC_i(t+1) = (1 - eff_i)SoC_i(t) + c_iE_i^c(t) - d_iE_i^d(t)
\end{equation}
where $SoC_i(t)$ is the state-of-charge at time $tT$ of the storage unit in the microgrid $i$, and $eff_i \in [0,1), c_i > 0, d_i > 0$ are given parameters. The limitations of capacity and operation of every energy storage unit are given by following inequalities
\begin{subequations}\label{eq_storage_unit_ineq}
\begin{gather}
0 \le SoC_i(t) \le \overline{SoC_i}, \forall i \in \mathcal{V}^E, t \ge 0,\\
0 \le E_i^c(t) \le \overline{E_i^c}, \forall i \in \mathcal{V}^E, t \ge 0,\\
0 \le E_i^d(t) \le \overline{E_i^d}, \forall i \in \mathcal{V}^E, t \ge 0,
\end{gather}
\end{subequations}
where $\overline{SoC_i}, \overline{E_i^c}, \overline{E_i^d}$ are known upper bounds.

Concerning the environment and economic problem, renewable power generators have become more desired. However, they usually have small capacities and highly affected by the intermittent nature of renewable energy resources.
Then, for some microgrid without traditional power generators, using only their own power generators and energy storage units may not serve enough their power demands. Moreover, some microgrids have no power generators or energy storage units.
So, it is necessary that microgrids exchange energy via power transmission lines.
Denote by $F_{ij}(t)$ the power transmission from the microgrid $i$ to the microgrid $j$ in the time interval $[tT, (t+1)T]$.
Depending on the material and the length of the transmission line $(i,j) \in \mathcal{E}$, its transmitted power is restricted in a limited range as \eqref{eq_transmission_range}.
\begin{equation}\label{eq_transmission_range}
0 \le F_{ij}(t) \le \overline{F_{ij}}, \forall (i,j) \in \mathcal{E}.
\end{equation}
Let $\mathcal{N}_i = \{j \in {\mathcal{V}}: (i,j) \in \mathcal{E}\}$ be the set of neighbors of the microgrid $i \in \mathcal{V}$. It is clear that power can be transmitted from the microgrid $i$ to the microgrid $j$ only if $j \in \mathcal{N}_i$.
\subsection{MPC approach}
Let $D_i(t)$ be the power demand of the microgrid $i \in \mathcal{V}$ in $[tT, (t+1)T]$.
Ignoring the power transmission loss, from the conservation law, the power level of the node $i \in \mathcal{V}$ is
\begin{equation}\label{eq_energy_level}
\scalebox{0.825}{$\Delta P_i(t) = P_i(t) - E_i^c(t) + E_i^d(t) + \sum\limits_{j \in {\mathcal{N}}_i}\left[F_{ji}(t) - F_{ij}(t)\right] - D_i(t)$}
\end{equation}
For easy formulation, we consider here that $P_i(t) = 0, \forall i \notin \mathcal{V}^G$, and $E_i^c(t) = E_i^d(t) = 0, \forall i \notin \mathcal{V}^E$ are constant for all $t \ge 0$.
The requirement of avoiding the lack of power consuming in each microgrid can be stated by the following inequality:
\begin{equation}\label{eq_satisfied_demand}
\Delta P_i(t) \ge 0, \forall i \in \mathcal{V}, t \ge 0.
\end{equation}
Due to physical constraints (\ref{eq_generation_range}-\ref{eq_transmission_range}), there are some cases where the requirement \eqref{eq_satisfied_demand} can not be satisfied. For example, the total power demands are significantly larger than the summation of current generated powers and states-of-charge in energy storage.
To overcome these risks, MPC energy management strategies focus on finding an optimal control plan over a finite-time horizon $\{t_0+1, t_0+2, \dots, t_0+K\}$ for the network of interconnected microgrids under the assumption that the power demands can be predicted over this time horizon. Let $t_0$ be the current time, and $K$ be the chosen horizon length. With the power demands prediction, a model predictive control (MPC) problem for the coordination of the network of interconnected microgrids has the following form:
\begin{align*}
{\mathcal{P}}(t_0): & \min\limits_{P_i(t), F_{ij}(t), E_i^c(t), E_i^d(t)} \Phi(t_0)\\
&\textrm{s.t. } (\ref{eq_generation_range}-\ref{eq_satisfied_demand}), \forall t_0 \le t \le t_0 + K - 1.
\end{align*}
where the cost function $\Phi(t_0)$ represents the total cost for power generation, power transmission, and storage operation.
By solving ${\mathcal{P}}(t_0)$, the obtained optimal solution corresponds to the feasible control decisions for the microgrids in next $K$-steps that have the minimum cost.
In order to avoid myopic control decisions, the horizon length is set $K > 1$ and only the control decisions corresponding to the current time, i.e., $P_i(t_0), \forall i \in \mathcal{V}^G, E_i^c(t_0), E_i^d(t_0), \forall i \in \mathcal{V}^E,$ and $F_{ij}(t_0), \forall (i,j) \in \mathcal{E}$, are applied. The overall process is repeated in every step with the newly updated states and predicted power demands.

In this paper, we consider the cost function:
\begin{equation}\label{eq_total_cost}
\Phi(t_0) = \sum\limits_{t = t_0}^{t_0+K-1}\left(\sum\limits_{i \in \mathcal{V}^G}\Phi_i^g(t) + \sum\limits_{i \in \mathcal{V}^E}\Phi_i^e(t) + \sum\limits_{(i,j) \in \mathcal{E}}\Phi_{ij}^f(t)\right)
\end{equation}
where $\Phi_i^g(t) = \Phi_i^g(P_i(t))$ is the cost to generate the power $P_i(t)$ of power generators in the microgrid $i \in \mathcal{V}^G$, $\Phi_i^e(t) = \Phi_i^c(E_i^c(t)) + \Phi_i^d(E_i^d(t))$ represents the cost for changing state-of-charge in the energy storage unit in the microgrid $i \in \mathcal{V}$, and $\Phi_{ij}^f(t) = \Phi_{ij}^f(F_{ij}(t))$ is the cost for transfer the power $F_{ij}(t)$ from the microgrid $i$ to the microgrid $j$, in the time interval $[tT,(t+1)T]$.
The cost functions $\Phi_i^g(\cdot), \forall i \in \mathcal{V}^G$, $\Phi_i^c(\cdot), \Phi_i^d(\cdot), \forall i \in \mathcal{V}$, and $\Phi_{ij}^f(\cdot), \forall (i,j) \in \mathcal{E},$ are usually assumed to have quadratic forms, e.g., in \cite{GuillermoIrisarri1998, XSHan2001, DiWu2017, HyoSungAhn2018}.
In this paper, we make the following assumption:
\begin{Assumption}\label{aspt_cost}
All generation cost functions $\Phi_i^g(\cdot), \forall i \in \mathcal{V}^G$, operation cost functions of energy storage units $\Phi_i^c(\cdot), \Phi_i^d(\cdot), \forall i \in \mathcal{V}$, and transmission cost functions $\Phi_{ij}^t(\cdot), \forall (i,j) \in \mathcal{E},$ are strongly convex and smooth. Moreover, these functions are increasing in $[0, \infty)$.
\end{Assumption}
\begin{Remark}
In practical, the power exchanging between two microgrids $i,j$, where $(i,j), (j,i) \in \mathcal{E}$, should be transmitted in one direction, i.e., either from $i$ to $j$ or vice versa. It requires that $F_{ij}(t)F_{ji}(t) = 0, \forall(i,j) \in \mathcal{E}$, $t \ge 0$. Similarly, one energy storage unit should be only charged or discharged in one time interval, i.e., $E_i^c(t)E_i^d(t) = 0, \forall i \in \mathcal{V}^E, t \ge 0$.
In this paper, we do not include these constraints in the energy management problem since they are guaranteed at the optimal control decisions (as shown later in Lemma \ref{lm_optimal_decisions}).
\end{Remark}
\section{Problem statement}
\subsection{Reformulation for uncertain prediction}
One challenging issue in formulating MPC-based energy management $\mathcal{P}(t_0)$ is the uncertainties in predicting the power demands and the power capacities, i.e., $D_i(t) \forall i \in \mathcal{V}$ and $\overline{P_i}(t) \forall i \in \mathcal{V}^G$ for all $t \ge t_0$.
It is because the fluctuation in the demands of the energy consumers and the intermittent nature of the resources.
Fortunately, based on historical data, the expectations and variations of power demands and generation capacities can be estimated with high confidence in a short term.
So, this paper considers the energy management problem under the following assumption.
\begin{Assumption}\label{aspt_3}
Let $t_0$ be the current time index. Expectations $Expt[\overline{P_i}(t)] \forall i \in \mathcal{V}^G,$ $Expt[D_i(t)] \forall i \in \mathcal{V}$, and variations $Var[\overline{P_i}(t)] \forall i \in \mathcal{V}^G, Var[D_i(t)] \forall i \in \mathcal{V}$ are given for all $t = t_0, t_0+1, \dots, t_0+K-1$.
\end{Assumption}

As the power demands are predicted with uncertainties, it is difficult to guarantee the inequality \eqref{eq_satisfied_demand} perfectly. It needs to be changed into the following chance constraint:
\begin{equation}\label{eq_constraint_prob_demand}
{\mathbb{P}}[\Delta P_i(t) \ge D_i(t)] \ge 1 - \epsilon, \forall i \in \mathcal{V}.
\end{equation}
In \eqref{eq_constraint_prob_demand}, $\mathbb{P}[\cdot]$ represents the probability of the event in $[\cdot]$ and $\epsilon$ is a given parameter.
As stated in \cite{Calafiore2006}, the constraint \eqref{eq_constraint_prob_demand} can be converted into the following inequality
\begin{equation}\label{eq_chacneConstraintsIneqBalance}
P_i(t) - E_i^c(t) + E_i^d(t) + \sum\limits_{j \in {\mathcal{N}}_i}\left[F_{ji}(t) - F_{ij}(t)\right] \ge \tilde{D}_{i,t}, \forall i \in \mathcal{V}
\end{equation}
where $\tilde{D}_{i,t} = Expt[D_i(t)] + \sqrt{\frac{1 - \epsilon}{\epsilon}}Var[D_i(t)], \forall t \in [t_0,t_0+K+1], i \in \mathcal{V}$.
Similarly, the constraint \eqref{eq_generation_range} is converted into
\begin{equation}\label{eq_chacneConstraintsGeneration}
0 \le P_i(t) \le \tilde{P}_{i,t}, \forall i \in \mathcal{V}^G
\end{equation}
where $\tilde{P}_{i,t} = Expt[\overline{P_i}(t)] - \sqrt{\frac{1 - \epsilon}{\epsilon}}Var[\overline{P_i}(t)], \forall t \in [t_0,t_0+K+1], i \in \mathcal{V}$.
Replacing the uncertain constraints (\ref{eq_energy_level}-\ref{eq_satisfied_demand}) and \eqref{eq_generation_range} by \eqref{eq_constraint_prob_demand} and \eqref{eq_chacneConstraintsIneqBalance}, respectively, we obtain the stochastic MPC energy management problem $\hat{\mathcal{P}}(t_0)$ as follows.
\begin{align*}
\hat{\mathcal{P}}(t_0): & \min\limits_{P_i(t), F_{ij}(t), E_i^c(t), E_i^d(t)} \Phi(t_0)\\
\textrm{s.t. }& \eqref{eq_chacneConstraintsGeneration}, \eqref{eq_generation_ramp}, \eqref{eq_storage_unit}, \eqref{eq_storage_unit_ineq}, \eqref{eq_transmission_range}, \eqref{eq_chacneConstraintsIneqBalance}, \forall t_0 \le t \le t_0 + K - 1.
\end{align*}
It is clear that $\hat{\mathcal{P}}(t_0)$ is a strongly convex optimization problem. Then this problem has a unique optimal solution if its feasible set is nonempty. So, we make the following assumption.
\begin{Assumption}\label{aspt_4}
For all $t_0$, there exists at least one set of vectors $\{\textbf{P}(t_0), \textbf{E}^c(t_0), \textbf{E}^d(t_0), \textbf{F}(t_0)\}$ satisfying all constraints in $\hat{\mathcal{P}}(t_0)$ strictly. In which, $\textbf{P}(t_0) = \textrm{col}\{P_i(t_0+k): i \in \mathcal{V}^G, k \in [0,K-1]\}$, $\textbf{E}^c(t_0) = \textrm{col}\{E_i^c(t_0+k): i \in \mathcal{V}^E, k \in [0,K-1]\}$, $\textbf{E}^d(t_0) = \textrm{col}\{E_i^d(t_0+k): i \in \mathcal{V}^E, k \in [0,K-1]\}$, and $\textbf{F}(t_0) = \textrm{col}\{F_{ij}(t_0+k): i \in \mathcal{E}, k \in [0,K-1]\}$.
\end{Assumption}
\subsection{Communication graph}
When the power network consists of many interconnected microgrids, the stochastic MPC problem $\hat{\mathcal{P}}(t_0)$ may have a large size. Then centralized control strategies, which use only one computational unit to solve $\hat{\mathcal{P}}(t_0)$, can be ineffective.
Instead, distributed approaches can achieve better scalability and robustness since each microgrid $i \in \mathcal{V}$ is assigned to a local controller.
This controller is in charge of implementing the optimal control decisions and collecting information (e.g., current states of local power generators, energy storage unit, and predicted power demands) corresponding to its microgrid. By abuse of notation, we use the agent $i$ to refer the local controller of the microgrid $i \in \mathcal{V}$. When $j \in \mathcal{N}_i$, the agent $j$ is called a neighbor of the agent $i$.
To determine the optimal control decisions, agents cooperate to solve the stochastic MPC problem $\hat{\mathcal{P}}(t_0)$.
As the computational load is shared among agents, the distributed setup reduces the computation complexities of optimization methods and can achieve the optimal solution with small computation time.

Assuming that the local control decisions of each agent $i \in \mathcal{V}$ consists of the generated powers, i.e., $\hat{P}_i(t_0+k), \forall k \in [0,K-1]$, (if $i \in \mathcal{V}^G$), the powers delivered to and by the energy storage unit, i.e., $\hat{E}_i^c(t_0+k), \hat{E}_i^d(t_0+k), \forall k \in [0,K-1]$, and the power transmitted from the microgrid $i$ to its neighbors, i.e., $\hat{F}_{ij,k}, \forall j \in \mathcal{N}_i, k \in [0,K-1]$.
From the constraints in \eqref{eq_chacneConstraintsIneqBalance}, the power transmission in the line $(i,j) \in \mathcal{E}$ needs to be determined by the coordination of two agents $i,j$.
So, we make the following assumption for the information network of interconnect microgrids.
\begin{Assumption}\label{as_communicate}
For every agent $i \in \mathcal{V}$, it is able to communicate with the agent $j$ if and only if $j \in \mathcal{N}_i$.
\end{Assumption}

We denote the optimal solution of the problem $\hat{\mathcal{P}}(t_0)$ by variables with the superscript $opt$, i.e., $P_i^{opt}(t_0+k)$ for all $i \in \mathcal{V}^G$, $E_i^{c,opt}(t_0+k), E_i^{d,opt}(t_0+k)$ for all $i \in \mathcal{V}^E$, $F_{ij}^{opt}(t_0+k) \forall j \in \mathcal{N}_i$ for all $i \in \mathcal{V}$.
The following lemma guarantees that, in the optimal solution of the problem $\hat{\mathcal{P}}(t_0)$, there is at most one direction of power transmitted in each edge and each energy storage unit is not charged and discharged at the same time.
\begin{Lemma}\label{lm_optimal_decisions}
If Assumption \ref{aspt_4} is satisfied, we have
\begin{subequations}\label{eq_properties}
\begin{gather}
F_{ij}^{opt}(t_0+k)F_{ji}^{opt}(t_0+k) = 0, \forall (i,j) \in \mathcal{E}, k \in [0,K-1],\\
E_{i}^{c,opt}(t_0+k)E_{i}^{d,opt}(t_0+k) = 0, \forall i \in \mathcal{V}^E, k \in [0,K-1].
\end{gather}
\end{subequations}
\end{Lemma}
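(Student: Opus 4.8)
\emph{Proof strategy.} The plan is to argue by contradiction using a local exchange/perturbation of an optimal solution of $\hat{\mathcal{P}}(t_0)$, which exists and is unique because $\hat{\mathcal{P}}(t_0)$ is strongly convex with nonempty feasible set (Assumption \ref{aspt_4}). The template is the same in both cases: pick a perturbation of the two conflicting variables that (i) does not touch any constraint in which they do not appear, (ii) keeps feasible the handful of constraints in which they do, and (iii) strictly lowers the objective. Point (iii) is immediate from Assumption \ref{aspt_cost}: a strongly convex function that is nondecreasing on $[0,\infty)$ has a strictly positive derivative on $(0,\infty)$, so each cost term is strictly increasing there, hence decreasing any positive argument strictly decreases it while every other cost term stays put.

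For the transmission identity, I would suppose $F_{ij}^{opt}(\tau)>0$ and $F_{ji}^{opt}(\tau)>0$ for some $(i,j)\in\mathcal{E}$ and $\tau=t_0+k$, and subtract $\delta:=\min\{F_{ij}^{opt}(\tau),F_{ji}^{opt}(\tau)\}>0$ from both. Both new values stay in $[0,\overline{F_{ij}}]$; the node-$i$ and node-$j$ balance constraints \eqref{eq_chacneConstraintsIneqBalance} see these variables only through the difference $F_{ji}(\tau)-F_{ij}(\tau)$, which is unchanged; and no other constraint involves $F_{ij},F_{ji}$. Hence the perturbed point is feasible while the transmission cost strictly drops — a contradiction, so at least one of the two is zero.

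For the storage identity the same idea applies, but this is the delicate case and the main obstacle, because $E_i^c(\tau),E_i^d(\tau)$ enter \emph{two} kinds of constraints at once: the state-of-charge recursion \eqref{eq_storage_unit} (which links step $\tau$ to every later step, and thence to \eqref{eq_storage_unit_ineq}) and the node-$i$ balance \eqref{eq_chacneConstraintsIneqBalance}, and they do so with different coefficients $c_i$ and $d_i$. My plan is the \emph{proportional} perturbation $E_i^c(\tau)\mapsto E_i^c(\tau)-d_i\theta$, $E_i^d(\tau)\mapsto E_i^d(\tau)-c_i\theta$ with $\theta:=\min\{E_i^{c,opt}(\tau)/d_i,\,E_i^{d,opt}(\tau)/c_i\}>0$: then $c_i(d_i\theta)=d_i(c_i\theta)$, so the right-hand side of \eqref{eq_storage_unit} at step $\tau$ is unchanged, $SoC_i$ is unchanged from $\tau+1$ onward and \eqref{eq_storage_unit_ineq} is preserved, both perturbed values stay nonnegative and below their caps, and at least one becomes zero. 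The one remaining check — the hard part — is that \eqref{eq_chacneConstraintsIneqBalance} at node $i$ still holds: its left-hand side changes by $(d_i-c_i)\theta$, so it is preserved as soon as $d_i\ge c_i$, which is the physically natural regime ($c_i\le1$ a charging-efficiency factor, $d_i\ge1$ a discharging-loss factor); with that settled, the operation cost $\Phi_i^c+\Phi_i^d$ strictly drops and optimality is contradicted, giving $E_i^{c,opt}(\tau)E_i^{d,opt}(\tau)=0$.
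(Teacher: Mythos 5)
Your argument is the same perturbation-and-contradiction scheme the paper uses, and for the transmission identity it coincides with the paper's proof: the paper replaces $\bigl(F_{ij}^{opt},F_{ji}^{opt}\bigr)$ by $\bigl(F_{ij}^{opt}-F_{ji}^{opt},\,0\bigr)$, which is exactly your ``subtract $\delta=\min$ from both'', and the feasibility/cost argument is identical. Where you diverge is the storage identity, and there your treatment is actually more careful than the paper's. The paper disposes of it with a single ``Similarly'', but the naive analogue of the transmission swap, $\bigl(E_i^{c}-E_i^{d},\,0\bigr)$, does \emph{not} obviously preserve feasibility: it leaves the balance constraint \eqref{eq_chacneConstraintsIneqBalance} untouched but shifts the state of charge in \eqref{eq_storage_unit} by $(d_i-c_i)E_i^{d}$ at every later step, so the bounds \eqref{eq_storage_unit_ineq} can be violated whenever $c_i\neq d_i$. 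Your proportional perturbation ($E_i^c\mapsto E_i^c-d_i\theta$, $E_i^d\mapsto E_i^d-c_i\theta$) fixes the SoC trajectory exactly and instead pushes the mismatch into the balance inequality, where it is harmless precisely when $d_i\ge c_i$; that is the condition you flag. You are right that this condition (or something equivalent, e.g.\ $c_i=d_i$, or a lossy round trip $c_i\le d_i$) is genuinely needed and is not just an artifact of your particular perturbation: if $c_i>d_i$, simultaneous charging and discharging manufactures net power ($E^d-E^c>0$ with unchanged SoC), and with cheap storage-operation costs and expensive generation one can build instances satisfying Assumptions \ref{aspt_cost} and \ref{aspt_4} in which the optimum has $E_i^{c,opt}E_i^{d,opt}>0$, so the lemma as stated fails. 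The paper's simulation data (Table \ref{tbl_store}) does satisfy $c_i\le d_i$, but no such hypothesis appears in the lemma or its proof. So: your transmission argument matches the paper; your storage argument is a correct proof under the additional (physically natural, and in fact necessary) assumption $d_i\ge c_i$, and in identifying that assumption you have exposed a gap that the paper's ``Similarly'' silently skips.
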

\begin{proof}
Assume that there exist some $(i,j) \in \mathcal{E}$ and $k \in [0, K-1]$ where $F_{ij}^{opt}(t_0+k)$ and $F_{ji}^{opt}(t_0+k)$ are both positive and $F_{ij}^{opt}(t_0+k) \ge F_{ji}^{opt}(t_0+k)$ without loss of generality. It is easy to verify that changing $F_{ij}^{opt}(t_0+k), F_{ji}^{opt}(t_0+k)$ with $F_{ij}^{opt}(t_0+k) - F_{ji}^{opt}(t_0+k), 0$ still guarantees the feasibility but reduces the cost function of the problem $\hat{\mathcal{P}}(t_0)$. This contradicts  with the definition of the optimal solution. Thus, $F_{ij}^{opt}(t_0+k)F_{ji}^{opt}(t_0+k) = 0, \forall (i,j) \in \mathcal{E}, k \in [0, K-1]$.
Similarly, we can prove that $E_{i}^{c,opt}(t_0+k)E_{i}^{d,opt}(t_0+k) = 0, \forall i \in \mathcal{V}, k \in [0, K-1]$.
\end{proof}

To conclude this section, we state the main problem of this paper as follows:
\begin{Problem}\label{problem_statement}
Design a distributed method for every agent $i \in \mathcal{V}$ to find its optimal control decisions while using only its own information and communicating with its neighbors in $\mathcal{N}_i$. The optimal control decisions belonging to the agent $i$ consists of $P_i^{opt}(t_0+k)$ (if $i \in \mathcal{V}^G$), $E_i^{c,opt}(t_0+k), E_i^{d,opt}(t_0+k)$ (if $i \in \mathcal{V}^E$) and $F_{ij}^{opt}(t_0+k) \forall j \in \mathcal{N}_i$ for all $k \in [0, K-1]$.
\end{Problem}
\section{Distributed solution based on dual ascent method}
\subsection{Compact form of MPC energy management problem}
Define vectors: $\hat{\textbf{P}}_i = \textrm{col}\{P_i(t_0+k): k \in [0,K-1]\}$ for all $i \in \mathcal{V}^G$, $\hat{\textbf{E}}_i = \textrm{col}\{\textrm{col}\{E_{c,i}(t_0+k): k \in [0,K-1]\}, \textrm{col}\{E_{d,i}(t_0+k): k \in [0,K-1]\}\}$ for all $i \in \mathcal{V}^E$, and $\hat{\textbf{F}}_i = \textrm{col}\{\textrm{col}\{F_{ij}(t_0+k): j \in \mathcal{N}_i\}: k \in [0,K-1]\}$ for all $i \in \mathcal{V}$. Then the vector of all local variables of each agent $i$ is
\begin{equation}\label{eq_local_variables_vector}
\hat{\textbf{x}}_i = \left\{\begin{matrix}
\textrm{col}\{\hat{\textbf{P}}_i, \hat{\textbf{E}}_i, \hat{\textbf{F}}_i\}, & \textrm{if } i \in \mathcal{V}^G \cap \mathcal{V}^E,\\
\textrm{col}\{\hat{\textbf{P}}_i, \hat{\textbf{F}}_i\}, & \textrm{if } i \in \mathcal{V}^G \backslash \mathcal{V}^E,\\
\textrm{col}\{\hat{\textbf{E}}_i, \hat{\textbf{F}}_i\}, & \textrm{if } i \in \mathcal{V}^E \backslash \mathcal{V}^G,\\
\hat{\textbf{F}}_i, & \textrm{otherwise}.
\end{matrix}\right.
\end{equation}
Let $\Phi_i(\hat{\textbf{x}}_i)$ be the local cost function of the agent $i$ which is the sum of all cost functions depending local variables of the agent $i$. We have:
\begin{equation}\label{eq_local_costfunction}
\Phi_i(\hat{\textbf{x}}_i) = \left\{\begin{matrix}
\sum_{k = 0}^{K-1}\Bigl(\Phi_i^g(P_i(t_0+k)) + \Phi_i^c(E_{c,i}(t_0+k)) + \Phi_i^d(E_{d,i}(t_0+k)) + \sum_{j \in \mathcal{N}_i} \Phi_{ij}^f(F_{ij}(t_0+k))\Bigr), & \textrm{if } i \in \mathcal{V}^G \cap \mathcal{V}^E,\\
\sum_{k = 0}^{K-1}\Bigl(\Phi_i^g(P_i(t_0+k)) + \sum_{j \in \mathcal{N}_i} \Phi_{ij}^f(F_{ij}(t_0+k))\Bigr), & \textrm{if } i \in \mathcal{V}^G \backslash \mathcal{V}^E,\\
\sum_{k = 0}^{K-1}\Bigl(\Phi_i^c(E_{c,i}(t_0+k)) + \Phi_i^d(E_{d,i}(t_0+k)) + \sum_{j \in \mathcal{N}_i} \Phi_{ij}^f(F_{ij}(t_0+k))\Bigr), & \textrm{if } i \in \mathcal{V}^E \backslash \mathcal{V}^G,\\
\sum_{k = 0}^{K-1} \sum_{j \in \mathcal{N}_i} \Phi_{ij}^f(F_{ij}(t_0+k)), & \textrm{otherwise}.
\end{matrix}\right..
\end{equation}
Denote by $n_i$ the dimension of the local variable vector $\hat{\textbf{x}}_i$ for every agent $i \in \mathcal{V}$. It is clear that the local cost function $\Phi_i(\hat{\textbf{x}}_i)$ is the sum of $n_i$ cost functions, each of them depends on one variable $[\hat{\textbf{x}}_i]_r$ where $ 1 \le r \le n_i$.
For easy notation, we define the function $\Phi_{ir}([\hat{\textbf{x}}_i]_r)$ such that $\Phi_i(\hat{\textbf{x}}_i) = \sum_{r = 1}^{n_i} \Phi_{ir}([\hat{\textbf{x}}_i]_r)$.
Under Assumption \ref{aspt_cost}, the function $\Phi_{ir}([\hat{\textbf{x}}_i]_r)$ is strongly convex and smooth, $\forall r \in [1,n_i], i \in \mathcal{V}$. Define $\sigma_{ir}$ and $L_{ir}$ as the convexity modulus and the Lipschitz gradient constant of the function $\Phi_{ir}([\hat{\textbf{x}}_i]_r)$. We have $\sigma_{ir} > 0$ and $L_{ir} > 0$, $\forall r \in [1,n_i], i \in \mathcal{V}$.
Define
\begin{subequations}\label{eq_parameter_cost}
\begin{align}
\sigma_{min} &= \min\limits_{i \in \mathcal{V}} \left\{\min\limits_{r \in [1,n_i]} \{\sigma_{ir}\}\right\},\\
L_{max} &= \max\limits_{i \in \mathcal{V}} \left\{\max\limits_{r \in [1,n_i]} \{L_{ir}\}\right\}.
\end{align}
\end{subequations}
As definition, we have $\Phi(t_0) = \sum_{i \in \mathcal{V}} \sum_{r = 1}^{n_i} \Phi_{ir}([\hat{\textbf{x}}_i]_r)$ and $\Phi_{ir}([\hat{\textbf{x}}_i]_r)$ is a function of one variable for all $r \in [1, n_i], \forall i \in \mathcal{V}$. So, $\Phi(t_0)$ is a strongly convex function with modulus $\sigma_{min} > 0$ and is a smooth function having Lipschitz gradient constant $L_{max}$.

In the stochastic MPC energy management problem $\hat{\mathcal{P}}(t_0)$, the inequalities in \eqref{eq_chacneConstraintsGeneration}, \eqref{eq_generation_ramp}, \eqref{eq_storage_unit}, \eqref{eq_storage_unit_ineq}, \eqref{eq_transmission_range} are local constraints since they depend on local information of individual agents.
Define the matrix $\textbf{A}_i^g$ and the vector $\textbf{a}_i^g$ for each agent $i \in \mathcal{V}^G$ such that the constraints \eqref{eq_chacneConstraintsGeneration}, \eqref{eq_generation_ramp} can be described in the following matrix form:
\begin{equation}\label{eq_matrixform_temp1}
\textbf{A}_i^g\hat{\textbf{x}}_i \le \textbf{a}_i^g, \forall i \in \mathcal{V}^G.
\end{equation}
Similarly, we can find matrices $\textbf{A}_i^e, \textbf{A}_i^t$, and the vectors $\textbf{a}_i^e, \textbf{a}_i^t$ such that the local constraints \eqref{eq_storage_unit}, \eqref{eq_storage_unit_ineq} and \eqref{eq_transmission_range} can be described in the matrix forms \eqref{eq_matrixform_temp2} and \eqref{eq_matrixform_temp3}, respectively.
\begin{equation}\label{eq_matrixform_temp2}
\textbf{A}_i^e\hat{\textbf{x}}_i \le \textbf{a}_i^e, \forall i \in \mathcal{V}^E.
\end{equation}
\begin{equation}\label{eq_matrixform_temp3}
\textbf{A}_i^t\hat{\textbf{x}}_i \le \textbf{a}_i^t, \forall i \in \mathcal{V}.
\end{equation}
Also we define the matrices $\textbf{A}_i, \textbf{B}_{ii}, \textbf{B}_{ij} \forall j \in \mathcal{N}_i$, and the vectors $\textbf{a}_i, \textbf{b}_i$ such that
$\textbf{A}_i = \textrm{blkcol}\{\textbf{A}_i^g, \textbf{A}_i^e, \textbf{A}_i^t\}, \textbf{a}_i = \textrm{col}\{\textbf{a}_i^g, \textbf{a}_i^e, \textbf{a}_i^t\}$, $\textbf{B}_{ii}\hat{\textbf{x}}_i = \textrm{col}\{P_i(t) + E_i^d(t) - E_i^c(t) - \sum_{j \in \mathcal{N}_i}F_{ij}(t)|t \in [t_0, t_0+K-1]\}$ for $i \in \mathcal{V}^G \cap \mathcal{V}^E$; $\textbf{A}_i = \textrm{blkcol}\{\textbf{A}_i^e, \textbf{A}_i^t\}, \textbf{a}_i = \textrm{col}\{\textbf{a}_i^e, \textbf{a}_i^t\}$, $\textbf{B}_{ii}\hat{\textbf{x}}_i = \textrm{col}\{E_i^d(t) - E_i^c(t) - \sum_{j \in \mathcal{N}_i}F_{ij}(t)|t \in [t_0, t_0+K-1]\}$ for $i \in \mathcal{V}^G \backslash \mathcal{V}^E$; $\textbf{A}_i = \textrm{blkcol}\{\textbf{A}_i^g, \textbf{A}_i^t\}, \textbf{a}_i = \textrm{col}\{\textbf{a}_i^g, \textbf{a}_i^t\}$, $\textbf{B}_{ii}\hat{\textbf{x}}_i = \textrm{col}\{P_i(t) - \sum_{j \in \mathcal{N}_i}F_{ij}(t)|t \in [t_0, t_0+K-1]\}$ for $i \in \mathcal{V}^E \backslash \mathcal{V}^G$; $\textbf{A}_i = \textbf{A}_i^t, \textbf{a}_i = \textbf{a}_i^t$, $\textbf{B}_{ii}\hat{\textbf{x}}_i = \textrm{col}\{-\sum_{j \in \mathcal{N}_i}F_{ij}(t)|t \in [t_0, t_0+K-1]\}$ for $i \notin \mathcal{V}^G \cup \mathcal{V}^E$; and $\textbf{B}_{ij}\hat{\textbf{x}}_i = \textrm{col}\{F_{ij}(t)|t \in [t_0, t_0+K-1]\}$ for all $j \in \mathcal{N}_i, i \notin \mathcal{V}$.
Then the problem $\hat{\mathcal{P}}(t_0)$ can be rewritten in matrix form as \eqref{eq_MPC_matrixform} as follows.
\begin{subequations}\label{eq_MPC_matrixform}
\begin{align}
\min\limits_{\hat{\textbf{x}}_i, \forall i \in \mathcal{V}}& \sum\limits_{i \in \mathcal{V}} \Phi_i(\hat{\textbf{x}}_i)\\
\textrm{s.t. }& \textbf{A}_i\hat{\textbf{x}}_i \le \textbf{a}_i, \forall i \in \mathcal{V},\\
& \textbf{B}_{ii}\hat{\textbf{x}}_i + \sum\limits_{j \in \mathcal{N}_i}\textbf{B}_{ji}\hat{\textbf{x}}_j \le \textbf{b}_i, \forall i \in \mathcal{V}.
\end{align}
\end{subequations}
It is easy to see that $\textbf{B}_{ij}\hat{\textbf{x}}_i = \textrm{col}\{-F_{ij}(t):t \in [t_0, t_0+K-1]\}, \forall j \in \mathcal{N}_i$, and $\textbf{b}_i = \textrm{col}\{-\tilde{D}_{i,t}: t \in [t_0, t_0+K-1]\}$ for all $i \in \mathcal{V}$. The detailed formulations of $\textbf{A}_i, \textbf{a}_i$ and $\textbf{B}_{ii}$ are given in Appendix.
For each agent $i$, we use $m_i$ to denote the number of rows in the local matrix $\textbf{A}_i$, i.e., $\textbf{A}_i \in \mathbb{R}^{m_i \times n_i}$. It is clear that $\textbf{B}_{ii}, \textbf{B}_{ij} \in \mathbb{R}^{K \times n_i}$.
\subsection{Distributed solution method}
Let $\hat{\boldsymbol{\lambda}}_i, \forall i \in \mathcal{V}$, and $\hat{\boldsymbol{\zeta}}_i, \forall i \in \mathcal{V}$, be the dual variables corresponding to the inequality constraints (\ref{eq_MPC_matrixform}b), and (\ref{eq_MPC_matrixform}c), respectively. The Lagrangian function of the problem \eqref{eq_MPC_matrixform} is given as $\mathcal{L}(\hat{\textbf{x}}, \hat{\boldsymbol{\eta}}) = \sum\limits_{i \in \mathcal{V}} \Bigl\{\Phi_i(\hat{\textbf{x}}_i) + \hat{\boldsymbol{\lambda}}_i^T(\textbf{A}_i\hat{\textbf{x}}_i - \textbf{a}_i) + \hat{\boldsymbol{\zeta}}_i^T(\textbf{B}_{ii}\hat{\textbf{x}}_i + \sum\limits_{j \in \mathcal{N}_i}\textbf{B}_{ji}\hat{\textbf{x}}_j - \textbf{b}_i)\Bigr\}$.
Here, we use $\hat{\textbf{x}} = \textrm{col}\bigl\{\hat{\textbf{x}}_i: i \in \mathcal{V}\bigr\}$ and $\hat{\boldsymbol{\eta}} = \textrm{col}\bigl\{[\hat{\boldsymbol{\lambda}}_i^T, \hat{\boldsymbol{\zeta}}_i^T]^T: i \in \mathcal{V}\bigr\}$ as stacked vectors of primal and dual variables, respectively.
Then the dual function corresponding to the problem \eqref{eq_MPC_matrixform} is given as
\begin{equation}\label{eq_dual_function_MPC}
\Psi(\hat{\boldsymbol{\eta}}) = \min\limits_{\hat{\textbf{x}}} \mathcal{L}(\hat{\textbf{x}}, \hat{\boldsymbol{\eta}}).
\end{equation}
From the definition, the dual function is a concave function.
We can verify that $\mathcal{L}(\hat{\textbf{x}}, \hat{\boldsymbol{\eta}}) = \sum_{i \in \mathcal{V}}\mathcal{L}_i(\hat{\textbf{x}}_i, \hat{\boldsymbol{\eta}})$ where
\begin{align}
\mathcal{L}_i(\hat{\textbf{x}}_i, \hat{\boldsymbol{\eta}}) = \Phi_i(\hat{\textbf{x}}_i) + \Bigl(\textbf{A}_i^T\hat{\boldsymbol{\lambda}}_i + \textbf{B}_{ii}^T\hat{\boldsymbol{\zeta}}_i + \sum_{j \in \mathcal{N}_i}\textbf{B}_{ij}^T\hat{\boldsymbol{\zeta}}_j\Bigr)^T\hat{\textbf{x}}_i - \textbf{a}_i^T\hat{\boldsymbol{\lambda}}_i - \textbf{b}_i^T\hat{\boldsymbol{\zeta}}_i.\label{eq_local_LagrangianFunction}
\end{align}
Define the vector function $\textbf{x}^*(\hat{\boldsymbol{\eta}}) = \textrm{col}\{\textbf{x}_i^*(\hat{\boldsymbol{\eta}}): i \in \mathcal{V}\}$ where
\begin{equation}\label{eq_sup_optimalsolution}
\textbf{x}_i^*(\hat{\boldsymbol{\eta}}) = \arg\min\limits_{\hat{\textbf{x}}_i} \mathcal{L}_i(\hat{\textbf{x}}_i, \hat{\boldsymbol{\eta}})
\end{equation}
Due to the separation of the Lagrangian function, we have $\textbf{x}^*(\hat{\boldsymbol{\eta}}) = \arg\min_{\hat{\textbf{x}}} \mathcal{L}(\hat{\textbf{x}}, \hat{\boldsymbol{\eta}})$.
It is easy to see that the function $\mathcal{L}_i(\hat{\textbf{x}}_i, \hat{\boldsymbol{\eta}})$ is a strongly conve function with respect to the variable $\hat{\textbf{x}}_i$. Then, for each fixed dual variable vector $\hat{\boldsymbol{\eta}}$, the vector $\textbf{x}_i^*(\hat{\boldsymbol{\eta}})$ in \eqref{eq_sup_optimalsolution} is unique.
According to Proposition \ref{prop_gradientproject}, the dual function $\Psi(\hat{\boldsymbol{\eta}})$ has the following property:
\begin{Lemma}\label{lm_grad_dualfunction}
$\Psi(\hat{\boldsymbol{\eta}})$ is continuously differentiable with
\begin{subequations}\label{eq_dual_gradient}
\begin{align}
\nabla_{\hat{\boldsymbol{\lambda}}_i} \Psi(\hat{\boldsymbol{\eta}}) &= \textbf{A}_i\textbf{x}_i^*(\hat{\boldsymbol{\eta}}) - \textbf{a}_i, \forall i \in \mathcal{V},\\
\nabla_{\hat{\boldsymbol{\zeta}}_i} \Psi(\hat{\boldsymbol{\eta}}) &= \textbf{B}_{ii}\textbf{x}_i^*(\hat{\boldsymbol{\eta}}) + \sum_{j \in \mathcal{N}_i}\textbf{B}_{ji}\textbf{x}_j^*(\hat{\boldsymbol{\eta}}) - \textbf{b}_i, \forall i \in \mathcal{V}.
\end{align}
\end{subequations}
\end{Lemma}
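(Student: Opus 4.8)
The plan is to apply the standard Danskin-type differentiability result for dual functions, which the excerpt has already packaged as the cited ``Proposition~\ref{prop_gradientproject}'' (a gradient/projection characterization). The key structural facts I would assemble are: (i) each local Lagrangian $\mathcal{L}_i(\hat{\textbf{x}}_i, \hat{\boldsymbol{\eta}})$ is strongly convex in $\hat{\textbf{x}}_i$ (stated right above the lemma, inherited from Assumption~\ref{aspt_cost} via the separable sum $\Phi_i = \sum_r \Phi_{ir}$ with $\sigma_{ir}>0$), so the inner minimizer $\textbf{x}_i^*(\hat{\boldsymbol{\eta}})$ in \eqref{eq_sup_optimalsolution} exists and is unique; (ii) the Lagrangian separates, $\mathcal{L}(\hat{\textbf{x}},\hat{\boldsymbol{\eta}}) = \sum_{i\in\mathcal{V}}\mathcal{L}_i(\hat{\textbf{x}}_i,\hat{\boldsymbol{\eta}})$, so $\textbf{x}^*(\hat{\boldsymbol{\eta}})$ is the joint minimizer and $\Psi(\hat{\boldsymbol{\eta}}) = \mathcal{L}(\textbf{x}^*(\hat{\boldsymbol{\eta}}),\hat{\boldsymbol{\eta}})$; and (iii) $\mathcal{L}$ is jointly continuous and, for fixed $\hat{\textbf{x}}$, affine in $\hat{\boldsymbol{\eta}}$.

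First I would invoke Proposition~\ref{prop_gradientproject} (or Danskin's theorem) to conclude that under the unique-minimizer property $\Psi$ is continuously differentiable with $\nabla\Psi(\hat{\boldsymbol{\eta}}) = \nabla_{\hat{\boldsymbol{\eta}}}\mathcal{L}(\hat{\textbf{x}},\hat{\boldsymbol{\eta}})\big|_{\hat{\textbf{x}}=\textbf{x}^*(\hat{\boldsymbol{\eta}})}$, i.e. the gradient is obtained by freezing the primal variable at the minimizer and differentiating the explicit affine dependence on the multipliers. Then I would just read off the partial derivatives from the expression $\mathcal{L}(\hat{\textbf{x}},\hat{\boldsymbol{\eta}}) = \sum_{i}\bigl\{\Phi_i(\hat{\textbf{x}}_i) + \hat{\boldsymbol{\lambda}}_i^T(\textbf{A}_i\hat{\textbf{x}}_i - \textbf{a}_i) + \hat{\boldsymbol{\zeta}}_i^T(\textbf{B}_{ii}\hat{\textbf{x}}_i + \sum_{j\in\mathcal{N}_i}\textbf{B}_{ji}\hat{\textbf{x}}_j - \textbf{b}_i)\bigr\}$: differentiating in $\hat{\boldsymbol{\lambda}}_i$ kills every term except $\hat{\boldsymbol{\lambda}}_i^T(\textbf{A}_i\hat{\textbf{x}}_i-\textbf{a}_i)$, giving $\textbf{A}_i\textbf{x}_i^*(\hat{\boldsymbol{\eta}}) - \textbf{a}_i$; differentiating in $\hat{\boldsymbol{\zeta}}_i$ picks up the term $\hat{\boldsymbol{\zeta}}_i^T(\textbf{B}_{ii}\hat{\textbf{x}}_i + \sum_{j}\textbf{B}_{ji}\hat{\textbf{x}}_j - \textbf{b}_i)$ as well as the contributions from neighbors $j$ with $i\in\mathcal{N}_j$ (whose term $\hat{\boldsymbol{\zeta}}_j^T\sum_{k\in\mathcal{N}_j}\textbf{B}_{kj}\hat{\textbf{x}}_k$ contains $\textbf{B}_{ij}\hat{\textbf{x}}_i$), which by the symmetry of $\mathcal{E}$ reorganizes exactly into $\textbf{B}_{ii}\textbf{x}_i^*(\hat{\boldsymbol{\eta}}) + \sum_{j\in\mathcal{N}_i}\textbf{B}_{ji}\textbf{x}_j^*(\hat{\boldsymbol{\eta}}) - \textbf{b}_i$. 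This yields \eqref{eq_dual_gradient}.

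The main obstacle is not any computation but making the invocation of the differentiability theorem clean: one must check that the hypotheses of Proposition~\ref{prop_gradientproject}/Danskin actually hold, namely that $\mathcal{L}_i(\cdot,\hat{\boldsymbol{\eta}})$ attains its minimum at a unique point depending continuously on $\hat{\boldsymbol{\eta}}$, and that the minimum is finite for all $\hat{\boldsymbol{\eta}}$ in the relevant domain. The uniqueness follows from strong convexity; the continuity of $\hat{\boldsymbol{\eta}}\mapsto\textbf{x}_i^*(\hat{\boldsymbol{\eta}})$ follows from strong convexity together with the joint continuity of $\mathcal{L}_i$; and finiteness of the infimum follows because $\Phi_{ir}$ is strongly convex (hence coercive) while the coupling term is linear in $\hat{\textbf{x}}_i$, so $\mathcal{L}_i(\cdot,\hat{\boldsymbol{\eta}})$ is coercive for every fixed $\hat{\boldsymbol{\eta}}$. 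Once these are in place, continuous differentiability of $\Psi$ and the formulas \eqref{eq_dual_gradient} are immediate, and I would keep the write-up to a few lines referencing Proposition~\ref{prop_gradientproject}.
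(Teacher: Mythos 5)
Your proposal is correct and takes essentially the same route as the paper, which likewise gives no computation beyond noting that each $\mathcal{L}_i(\cdot,\hat{\boldsymbol{\eta}})$ is strongly convex (hence the inner minimizer $\textbf{x}_i^*(\hat{\boldsymbol{\eta}})$ is unique) and then invoking Proposition~\ref{prop_gradientproject} to read the gradient off the affine dependence of $\mathcal{L}$ on $\hat{\boldsymbol{\eta}}$; your added checks of coercivity and continuity only make the invocation more careful. One small remark: when differentiating in $\hat{\boldsymbol{\zeta}}_i$, only the single term $\hat{\boldsymbol{\zeta}}_i^T\bigl(\textbf{B}_{ii}\hat{\textbf{x}}_i + \sum_{j\in\mathcal{N}_i}\textbf{B}_{ji}\hat{\textbf{x}}_j - \textbf{b}_i\bigr)$ contributes (the neighbor terms you mention involve $\hat{\boldsymbol{\zeta}}_j$, not $\hat{\boldsymbol{\zeta}}_i$), but your final formula is the correct one, so this is only a slip in the narration.
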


Since the dual function $\Psi(\hat{\boldsymbol{\eta}})$ is a concave function, the function $-\Psi(\hat{\boldsymbol{\eta}})$ is convex. In addition, $-\Psi(\hat{\boldsymbol{\eta}})$ is a continuous differentiable function. Then, one optimal solution $\boldsymbol{\eta}^{opt} = \arg\min_{\hat{\boldsymbol{\eta}} \ge \textbf{0}} \{-\Psi(\hat{\boldsymbol{\eta}})\}$ can be found by applying a gradient-based method. It is clear that $\boldsymbol{\eta}^{opt}$ is also the optimal solution to the following problem
\begin{equation}\label{eq_dual_MPC}
\max\limits_{\hat{\boldsymbol{\eta}} \ge \textbf{0}} \textrm{ } \Psi(\hat{\boldsymbol{\eta}}).
\end{equation}
The optimization problem \eqref{eq_dual_MPC} is the dual problem corresponding to the stochastic MPC energy management problem \eqref{eq_MPC_matrixform}. Then $\boldsymbol{\eta}^{opt}$ is one optimal dual solution of the problem \eqref{eq_MPC_matrixform}. Applying the gradient projection method \cite{DimitriPBertsekas1999} for solving the dual problem \eqref{eq_dual_MPC}, we derive the following iteration method
\begin{align}\label{eq_grad_MPC}
\boldsymbol{\eta}(s+1) &= \arg\min\limits_{\hat{\boldsymbol{\eta}} \ge \textbf{0}} \left|\left|\hat{\boldsymbol{\eta}} - \boldsymbol{\eta}(s) - \frac{1}{L_{\Psi}}\nabla \Psi(\boldsymbol{\eta}(s))\right|\right|^2\nonumber\\
&= \left[\boldsymbol{\eta}(s) + \frac{1}{L_{\Psi}}\nabla \Psi(\boldsymbol{\eta}(s))\right]_+.
\end{align}
where $L_{\Psi}$ is the Lipschitz gradient constant of the dual function $\Psi(\hat{\boldsymbol{\eta}})$.

Define $\textbf{x}_i^*(s)$ as the minimizer of the function $\mathcal{L}_i(\hat{\textbf{x}}_i,\tilde{\boldsymbol{\eta}}(s))$ for the estimated dual solution $\tilde{\boldsymbol{\eta}}(s)$, i.e., $\textbf{x}_i^*(s) = \textbf{x}_i^*(\hat{\boldsymbol{\eta}}(s))$.
Then the detailed formulation of the update \eqref{eq_grad_MPC} is rewritten in detailed forms as follows.
\begin{subequations}\label{eq_grad_MPCdetailed}
\begin{align}
\textbf{x}_i^*(s) &= \arg\min\limits_{\hat{\textbf{x}}_i} \left\{\Phi_i(\hat{\textbf{x}}_i) + \Bigl(\textbf{A}_i^T\boldsymbol{\lambda}_i(s) + \textbf{B}_{ii}^T\boldsymbol{\zeta}_i(s) + \sum_{j \in \mathcal{N}_i}\textbf{B}_{ij}^T\boldsymbol{\zeta}_j(s)\Bigr)^T\hat{\textbf{x}}_i\right\},\\
\boldsymbol{\lambda}_i(s+1) &= \left[\boldsymbol{\lambda}_i(s) + \frac{1}{L_{\Psi}}\Bigl(\textbf{A}_i\textbf{x}_i^*(s) - \textbf{a}_i\Bigr)\right]_+.\\
\boldsymbol{\zeta}_i(s+1) &= \left[\boldsymbol{\zeta}_i(s) + \frac{1}{L_{\Psi}}\Bigl(\textbf{B}_{ii}\textbf{x}_i^*(s) + \sum_{j \in \mathcal{N}_i}\textbf{B}_{ji}\textbf{x}_j^*(s) - \textbf{b}_i\Bigr)\right]_+,
\end{align}
\end{subequations}
for all $i \in \mathcal{V}$.
As the update \eqref{eq_grad_MPCdetailed} is applying the gradient projection method to solve the dual problem \eqref{eq_dual_MPC}, we have the following theorem
\begin{Theorem}
The update \eqref{eq_grad_MPCdetailed} guarantees the asymptotic convergence of the estimated dual solution $\boldsymbol{\eta}(s)$ to one optimal dual solution $\boldsymbol{\eta}^{opt}$ of the problem \eqref{eq_dual_MPC}.
\begin{equation}
\lim\limits_{s \rightarrow \infty} \boldsymbol{\eta}(s) = \boldsymbol{\eta}^{opt} = \arg\max\limits_{\hat{\boldsymbol{\eta}} \ge \textbf{0}} \textrm{ } \Psi(\hat{\boldsymbol{\eta}}).
\end{equation}
\end{Theorem}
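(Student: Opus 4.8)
\emph{Proof plan.} The recursion \eqref{eq_grad_MPCdetailed} is, by construction, the gradient projection method \cite{DimitriPBertsekas1999} applied to the dual problem \eqref{eq_dual_MPC}, i.e., to the maximization of the concave function $\Psi$ over the closed convex set $\{\hat{\boldsymbol{\eta}} \ge \textbf{0}\}$. Convergence of the iterates will therefore follow from the standard convergence theory of that method once three facts are in place: (i) $\nabla\Psi$ is globally Lipschitz with the constant $L_{\Psi}$ used in the step size; (ii) strong duality holds and the dual optimal set $\arg\max_{\hat{\boldsymbol{\eta}}\ge\textbf{0}}\Psi(\hat{\boldsymbol{\eta}})$ is nonempty; and (iii) the projected-gradient recursion on a convex problem with these properties and step size $1/L_{\Psi}$ produces a sequence converging to a point of the optimal set.

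For (i), the plan is to first show that the primal minimizer map $\hat{\boldsymbol{\eta}} \mapsto \textbf{x}^*(\hat{\boldsymbol{\eta}})$ is Lipschitz. Collecting the linear coefficients appearing in \eqref{eq_local_LagrangianFunction} over all $i \in \mathcal{V}$, the term added to $\Phi(t_0)$ in the Lagrangian has the form $(\textbf{M}^T\hat{\boldsymbol{\eta}})^T\hat{\textbf{x}}$ for a fixed matrix $\textbf{M}$ assembled from the blocks $\textbf{A}_i$, $\textbf{B}_{ii}$, $\textbf{B}_{ij}$. Since $\Phi(t_0) = \sum_{i}\sum_{r}\Phi_{ir}([\hat{\textbf{x}}_i]_r)$ is strongly convex with modulus $\sigma_{min} > 0$ under Assumption \ref{aspt_cost} and the added term is linear in $\hat{\textbf{x}}$, the map $\hat{\textbf{x}} \mapsto \mathcal{L}(\hat{\textbf{x}}, \hat{\boldsymbol{\eta}})$ is $\sigma_{min}$-strongly convex uniformly in $\hat{\boldsymbol{\eta}}$. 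The standard perturbation bound for minimizers of strongly convex functions then gives $\|\textbf{x}^*(\hat{\boldsymbol{\eta}}) - \textbf{x}^*(\hat{\boldsymbol{\eta}}')\| \le \frac{1}{\sigma_{min}}\|\textbf{M}^T(\hat{\boldsymbol{\eta}} - \hat{\boldsymbol{\eta}}')\| \le \frac{\rho_{max}(\textbf{M})}{\sigma_{min}}\|\hat{\boldsymbol{\eta}} - \hat{\boldsymbol{\eta}}'\|$. Because Lemma \ref{lm_grad_dualfunction} (and the reasoning behind Proposition \ref{prop_gradientproject}) expresses $\nabla\Psi(\hat{\boldsymbol{\eta}})$ as an affine function of $\textbf{x}^*(\hat{\boldsymbol{\eta}})$ with linear part $\textbf{M}$, this yields $\|\nabla\Psi(\hat{\boldsymbol{\eta}}) - \nabla\Psi(\hat{\boldsymbol{\eta}}')\| \le \frac{\rho_{max}(\textbf{M})^2}{\sigma_{min}}\|\hat{\boldsymbol{\eta}} - \hat{\boldsymbol{\eta}}'\|$, so $L_{\Psi} = \rho_{max}(\textbf{M})^2/\sigma_{min}$ is a valid Lipschitz constant, which is exactly the quantity assumed in \eqref{eq_grad_MPC}.

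For (ii), Assumption \ref{aspt_4} provides a point that satisfies all constraints of \eqref{eq_MPC_matrixform} strictly, i.e., Slater's condition holds; combined with the convexity of \eqref{eq_MPC_matrixform} this gives strong duality and attainment of the dual optimum, so $\arg\max_{\hat{\boldsymbol{\eta}}\ge\textbf{0}}\Psi(\hat{\boldsymbol{\eta}})$ is nonempty (and the primal optimum exists and is unique by strong convexity). For (iii), one applies the convergence result for the gradient projection method on a convex problem \cite{DimitriPBertsekas1999}: with the constant step size $1/L_{\Psi} \in (0, 2/L_{\Psi})$, the value $\Psi(\boldsymbol{\eta}(s))$ increases to the optimal value of \eqref{eq_dual_MPC} and, for every fixed $\boldsymbol{\eta}^{opt}$ in the optimal set, $\|\boldsymbol{\eta}(s) - \boldsymbol{\eta}^{opt}\|$ is nonincreasing (Fej\'er monotonicity); hence every limit point of $\{\boldsymbol{\eta}(s)\}$ is dual-optimal and, applying Fej\'er monotonicity with respect to one such limit point, the entire sequence converges to it, which establishes $\lim_{s\to\infty}\boldsymbol{\eta}(s) = \boldsymbol{\eta}^{opt}$.

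The main obstacle is step (i), specifically the perturbation estimate $\|\textbf{x}^*(\hat{\boldsymbol{\eta}}) - \textbf{x}^*(\hat{\boldsymbol{\eta}}')\| \le \frac{1}{\sigma_{min}}\|\textbf{M}^T(\hat{\boldsymbol{\eta}} - \hat{\boldsymbol{\eta}}')\|$, which is where strong convexity of the cost (Assumption \ref{aspt_cost}) is genuinely used and which pins down the Lipschitz constant $L_{\Psi}$; the remaining ingredients are either direct consequences of Assumption \ref{aspt_4} or invocations of standard results on gradient projection.
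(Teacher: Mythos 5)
Your proposal is correct and follows the same route the paper takes: the paper offers no detailed proof, simply asserting the theorem on the grounds that \eqref{eq_grad_MPCdetailed} is the gradient projection method applied to the dual problem \eqref{eq_dual_MPC} and invoking the standard theory from \cite{DimitriPBertsekas1999}. Your three ingredients --- the Lipschitz constant $L_{\Psi} = \rho_{max}(\textbf{M})^2/\sigma_{min}$ obtained from the strong-convexity perturbation bound on $\textbf{x}^*(\hat{\boldsymbol{\eta}})$, dual attainment via Assumption \ref{aspt_4} (Slater), and Fej\'er-monotone convergence of the projected-gradient iterates --- are exactly the standard details the paper leaves implicit, and they are filled in correctly.
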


When the optimal dual solution of the stochastic MPC based energy management problem \eqref{eq_MPC_matrixform} is found, every agent $i$ can determine its corresponding optimal solution as
\begin{equation}\label{eq_MPC_optimalsolution}
\textbf{x}_i^*(\boldsymbol{\eta}^{opt}) = \arg\min\limits_{\hat{\textbf{x}}_i} \left\{\Phi_i(\hat{\textbf{x}}_i) + \Bigl(\textbf{A}_i^T\boldsymbol{\lambda}_i^{opt} + \textbf{B}_{ii}^T\boldsymbol{\zeta}_i^{opt} + \sum_{j \in \mathcal{N}_i}\textbf{B}_{ij}^T\boldsymbol{\zeta}_j^{opt}\Bigr)^T\hat{\textbf{x}}_i\right\}, \forall i \in \mathcal{V}.
\end{equation}
According to Proposition \ref{prop_optimalsolution}, the vector $\textbf{x}^{opt} = \textrm{col}\left\{\textbf{x}_i^*(\boldsymbol{\eta}^{opt}): i \in \mathcal{V}\right\}$ is the optimal solution of the stochastic MPC based energy management problem \eqref{eq_MPC_matrixform}. Since the problem \eqref{eq_MPC_matrixform} is a strongly convex optimization problem, the optimal solution $\textbf{x}^{opt}$ is unique.
\section{ADMM-based solution method}
\subsection{Compact form of MPC energy management problem}
Consider the constraints \eqref{eq_chacneConstraintsIneqBalance} of the microgrid $i \in \mathcal{V}$, they depend not only its own local variables but also the power transmitted from its neighbors in $\mathcal{N}_i$ to this microgrid. To cope with this issue, we assume that the agent $i$ stores copies of these variables. Let $F_{ji}^{copy}(t_0+k) = F_{ji}(t_0+k), \forall j \in \mathcal{N}_i, k = 0, \dots, K-1$. The copy $F_{ji}^{copy}(t_0+k), \forall k = 0, \dots, K-1$, can substitute to the variable $F_{ji}(t_0+k)$ in the equations in \eqref{eq_chacneConstraintsIneqBalance} to convert these constraints to local constraints of the agent $i$. Thus, the vector of all local control variables in the agent $i$ is defined as $\hat{\textbf{u}}_i = col\left\{\hat{\textbf{x}}_i, col\left\{\hat{\textbf{x}}_{ij}: j \in \mathcal{N}_i\right\}\right\}$ where $\hat{\textbf{x}}_i$ is given in \eqref{eq_local_variables_vector} and
\[\hat{\textbf{x}}_{ij} = col\left\{F_{ji}^{copy}(t_0+k): j \in \mathcal{N}_i, k = 0, \dots, K-1\right\}, \forall j \in \mathcal{N}_i.\]

Consider the constraints $F_{ji}^{copy}(t_0+k) = F_{ji}(t_0+k), \forall j \in \mathcal{N}_i, k = 0, \dots, K-1$, as the coupled constraints between two agents $i$ and $j$. To describe these constraints in a stacked form, we define two matrices $\textbf{S}_{ij}$ and $\textbf{R}_{ij}$ such that $\textbf{S}_{ij}\hat{\textbf{u}}_i = \hat{\textbf{x}}_{ij}$ and $\textbf{R}_{ij}\hat{\textbf{u}}_i = col\left\{F_{ij}(t_0+k): 0 \le k \le K-1\right\}$, $\forall j \in \mathcal{N}_i$. So, for all $i = 1, \dots, N$, we have
\begin{equation}\label{eq_ADMMver_coupledequalities}
\textbf{H}_{ij}\hat{\textbf{u}}_i = \textbf{R}_{ji}\hat{\textbf{u}}_j, \forall j \in \mathcal{N}_i.
\end{equation}
Define the matrix $\textbf{M}_i = \left[\begin{matrix}-\textbf{A}_i & \textbf{0}_{n_i \times K|\mathcal{N}_i|}\\ -\textbf{B}_{ii} & \textbf{1} \otimes \textbf{I}_K\end{matrix}\right]$ and the vector $\textbf{m}_i = \left[\begin{matrix}-\textbf{a}_i\\ -\textbf{b}_i\end{matrix}\right]$. It is easy to verify that two constraints (\ref{eq_MPC_matrixform}b,c) can be rewritten as \eqref{eq_ADMMver_localineq}
\begin{equation}\label{eq_ADMMver_localineq}
\textbf{M}_i \hat{\textbf{u}}_i \ge \textbf{m}_i, \forall i \in \mathcal{V}.
\end{equation}
So, we have the problem \eqref{eq_MPC_ADMMver} which is equivalent to the stochastic MPC based energy management problem $\hat{\mathcal{P}}(t_0)$ as follows.
\begin{subequations}\label{eq_MPC_ADMMver}
\begin{align}
\min\limits_{\hat{\textbf{u}}_i, \forall i \in \mathcal{V}}& \sum\limits_{i \in \mathcal{V}} \hat{\Phi}(\hat{\textbf{u}}_i)\\
\textrm{s.t. }& \textbf{M}_i\hat{\textbf{u}}_i \le \textbf{m}_i, \forall i \in \mathcal{V},\\
& \textbf{S}_{ij}\hat{\textbf{u}}_i = \textbf{R}_{ji}\hat{\textbf{u}}_j, \forall j \in \mathcal{N}_i, \forall i \in \mathcal{V}.
\end{align}
\end{subequations}
where the local cost function $\hat{\Phi}(\hat{\textbf{u}}_i) = \Phi(\hat{\textbf{x}}_i)$ given in \eqref{eq_local_costfunction}.
\subsection{Distributed solution method}
To easily apply the ADMM algorithm \eqref{eq_ADMM} for solving the optimization problem \eqref{eq_MPC_ADMMver}, we rewrite this problem in the form of \eqref{eq_ADMMproblem}.
Define $\hat{\textbf{v}}_i = \textbf{M}_i\hat{\textbf{u}}_i - \textbf{m}_i$ and $\hat{\textbf{v}}_{ij}^{(1)} = \textbf{S}_{ij}\hat{\textbf{u}}_i, \hat{\textbf{v}}_{ij}^{(2)} = \textbf{R}_{ij}\hat{\textbf{u}}_i, \forall j \in \mathcal{N}_i$ and define the set $\Omega_i = \mathbb{R}_{+}^{m_i+K}$ for all local inequality constraints for agent $i$, the set $\Omega_{ij} = \left\{\textbf{v}_{ij}^{(1)}, \textbf{v}_{ji}^{(2)}: \textbf{v}_{ij}^{(1)} = \textbf{v}_{ji}^{(2)}\right\}$ for all coupled constraints between two agents $i$ and $j$.
Then we have the optimization problem \eqref{eq_SMPC_ADMMconsensus} which is equivalent to the problem \eqref{eq_MPC_ADMMver} as follows.
\begin{subequations}\label{eq_SMPC_ADMMconsensus}
\begin{align}
\min &\textrm{ } \sum\limits_{i \in \mathcal{V}}\hat{\Phi}_i(\hat{\textbf{u}}_i)\\
\textrm{s.t. }& \hat{\textbf{u}}_i \in \Omega_i, \forall i \in \mathcal{V},\\
&(\hat{\textbf{v}}_{ij}^{(1)}, \hat{\textbf{v}}_{ji}^{(2)}) \in \Omega_{ij}, \forall j \in \mathcal{N}_i, \forall i \in \mathcal{V},\\
&\textbf{M}_i\hat{\textbf{u}}_i - \hat{\textbf{v}}_i = \textbf{m}_i, \forall i \in \mathcal{V},\\
&\textbf{S}_{ij}\hat{\textbf{u}}_i - \hat{\textbf{v}}_{ij}^{(1)} = , \forall j \in \mathcal{N}_i, \forall i \in \mathcal{V},\\
&\textbf{R}_{ij}\hat{\textbf{u}}_i - \hat{\textbf{v}}_{ij}^{(2)} = , \forall j \in \mathcal{N}_i, \forall i \in \mathcal{V}.
\end{align}
\end{subequations}

Define the stacked vectors $\textbf{u} = \left[\begin{matrix} \hat{\textbf{u}}_1^T & \hat{\textbf{u}}_2^T & \cdots & \hat{\textbf{u}}_N^T\end{matrix} \right]^T$ and $\textbf{v} = \left[ \begin{matrix}\tilde{\textbf{v}}_1^T & \tilde{\textbf{v}}_2^T & \cdots & \tilde{\textbf{v}}_N^T\end{matrix} \right]^T$ where \[\tilde{\textbf{v}}_i = col\left\{\hat{\textbf{v}}_i, col\left\{ col\bigl\{\hat{\textbf{v}}_{ij}^{(1)}, \hat{\textbf{v}}_{ij}^{(2)}\bigr\}: j \in \mathcal{N}_i\right\}\right\}.\]
It is clear that the problem \eqref{eq_SMPC_ADMMconsensus} has the form of \eqref{eq_ADMMproblem} with
\[\phi_u(\textbf{u}) = \sum\limits_{i \in \mathcal{V}}\hat{\Phi}_i(\hat{\textbf{u}}_i), \phi_v(\textbf{v}) = 0,\]
$\textbf{H}_u = blkdiag\left\{\textbf{H}_1, \textbf{H}_2, \dots, \textbf{H}_N\right\}$  where
$\textbf{H}_i = blkcol\left\{\textbf{M}_i, blkcol\left\{\textbf{S}_{ij}, \textbf{R}_{ij}: j \in \mathcal{N}_i\right\}\right\}$, $\textbf{H}_v = -\textbf{I}$, $\textbf{h} = col\left\{\textbf{h}_1, \textbf{h}_2, \dots, \textbf{h}_N\right\}$ where $\textbf{h}_i = col\left\{\textbf{m}_i, \textbf{0}\right\}$, and $\Omega_u = \mathbb{R}^n$ where $n = \sum_{i \in \mathcal{V}}\left(n_i + |\mathcal{N}_i|K\right)$, $\Omega_v = \bigtimes_{i = 1}^{N} \left( \Omega_i \times \bigtimes_{j \in \mathcal{N}_i} \Omega_{ij}\right)$.

We use $\boldsymbol{\lambda}_i$, $\boldsymbol{\lambda}_{ij}^{(1)}$, and $\boldsymbol{\lambda}_{ij}^{(2)}$ to denote the dual variables corresponding to the equality constraints (\ref{eq_SMPC_ADMMconsensus}d), (\ref{eq_SMPC_ADMMconsensus}e), and (\ref{eq_SMPC_ADMMconsensus}f), respectively.
By applying the proximal ADMM algorithm \eqref{eq_ADMM} to solve the problem \eqref{eq_SMPC_ADMMconsensus}, we derive the update law \eqref{eq_updatelaw} as follows.
\begin{subequations}\label{eq_updatelaw}
\begin{align}
\textbf{v}_i(s+1) =\textrm{ }& \left[\textbf{M}_i\textbf{u}_i(s) - \textbf{m}_i - \frac{1}{\rho}\boldsymbol{\lambda}_i(s)\right]_{+},\\
\textbf{v}_{ij}^{(1)}(s+1) =\textrm{ }& \frac{1}{2}\Bigl( \textbf{S}_{ij}\textbf{u}_i(s) - \frac{1}{\rho}\boldsymbol{\lambda}_{ij}^{(1)}(s) + \textbf{R}_{ji}\textbf{u}_j(s) - \frac{1}{\rho}\boldsymbol{\lambda}_{ji}^{(2)}(s) \Bigr), \forall j \in \mathcal{N}_i,\\
\textbf{v}_{ij}^{(2)}(s+1) =\textrm{ }& \frac{1}{2}\Bigl( \textbf{S}_{ji}\textbf{u}_j(s) - \frac{1}{\rho}\boldsymbol{\lambda}_{ji}^{(1)}(s) + \textbf{R}_{ij}\textbf{u}_i(s) - \frac{1}{\rho}\boldsymbol{\lambda}_{ij}^{(2)}(s) \Bigr), \forall j \in \mathcal{N}_i,\\
\textbf{u}_i(s+1) =\textrm{ }& \arg\min\limits_{\hat{\textbf{u}}_i} \left\{\hat{\Phi}_i(\hat{\textbf{u}}_i) + \frac{\rho}{2}\left|\left|\textbf{M}_i\hat{\textbf{u}}_i - \hat{\textbf{v}}_i(s+1) - \textbf{m}_i - \frac{1}{\rho}\boldsymbol{\lambda}_i(s)\right|\right|^2\right.\nonumber\\ &\left.+ \frac{\rho}{2}\sum_{j \in \mathcal{N}_i} \left(\left|\left|\textbf{S}_{ij}\hat{\textbf{u}}_i - \hat{\textbf{v}}_{ij}^{(1)}(s+1) - \frac{1}{\rho}\boldsymbol{\lambda}_{ij}^{(1)}(s)\right|\right|^2 + \left|\left|\textbf{R}_{ij}\hat{\textbf{u}}_i - \hat{\textbf{v}}_{ij}^{(2)}(s+1) - \frac{1}{\rho}\boldsymbol{\lambda}_{ij}^{(2)}(s)\right|\right|^2 \right)\right\}\\
\boldsymbol{\lambda}_i(s+1) =\textrm{ }& \boldsymbol{\lambda}_i(s) - \rho\left(\textbf{M}_i\textbf{u}_i(s+1) - \textbf{m}_i - \textbf{v}_i(s+1)\right),\\
\boldsymbol{\lambda}_{ij}^{(1)}(s+1) =\textrm{ }& \boldsymbol{\lambda}_{ij}^{(1)}(s) - \rho\left(\textbf{S}_{ij}\textbf{u}_i(s+1) - \textbf{v}_{ij}^{(1)}(s+1)\right), \forall j \in \mathcal{N}_i,\\
\boldsymbol{\lambda}_{ij}^{(2)}(s+1) =\textrm{ }& \boldsymbol{\lambda}_{ij}^{(2)}(s) - \rho\left(\textbf{R}_{i_j}\textbf{u}_i(s+1) - \textbf{v}_{i_j}^{(2)}(s+1)\right), \forall j \in \mathcal{N}_i.
\end{align}
\end{subequations}
for all $i \in \mathcal{V}$.
The asymptotic convergence of ADMM iterations to the optimal solution of the problem \eqref{eq_SMPC_ADMMconsensus} is guaranteed by Proposition \ref{prop_ADMM}.

Let $\textbf{P}_i = \left[\begin{matrix}\textbf{I}_{n_i} & \textbf{0}_{n_i \times K|\mathcal{N}_i|}\end{matrix}\right]$, we have
\begin{equation}
\hat{\textbf{x}}_i = \textbf{P}_i \hat{\textbf{u}}_i.
\end{equation}
Moreover, we have
\begin{Theorem}\label{th_updatelaw}
The optimal solution of the distributed stochastic MPC traffic signal control problem \eqref{eq_MPC_matrixform} is achieved asymptotically by the update law \eqref{eq_updatelaw} in the following sense
\begin{equation}
\lim\limits_{s \rightarrow \infty} \textbf{P}_i \textbf{u}_i(s) = \textbf{x}_i^{opt}.
\end{equation}
\end{Theorem}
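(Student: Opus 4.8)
The plan is to reduce Theorem~\ref{th_updatelaw} to the generic proximal-ADMM convergence result, Proposition~\ref{prop_ADMM}, and then transport its conclusion back through the chain of equivalent reformulations \eqref{eq_MPC_matrixform} $\leftrightarrow$ \eqref{eq_MPC_ADMMver} $\leftrightarrow$ \eqref{eq_SMPC_ADMMconsensus}. The first step is a bookkeeping verification that the update law \eqref{eq_updatelaw} is exactly the algorithm \eqref{eq_ADMM} specialized to the data $(\phi_u,\phi_v,\textbf{H}_u,\textbf{H}_v,\textbf{h},\Omega_u,\Omega_v)$ identified immediately before the theorem, for which \eqref{eq_SMPC_ADMMconsensus} is an instance of the generic form \eqref{eq_ADMMproblem}. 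Since $\Omega_v$ is a Cartesian product of the sets $\Omega_i$ and $\Omega_{ij}$, the $\textbf{v}$-minimization of \eqref{eq_ADMM} decouples over these factors: over $\Omega_i=\mathbb{R}_{+}^{m_i+K}$ it reduces to the componentwise projection $[\cdot]_+$ that gives the first line of \eqref{eq_updatelaw}, and over each consensus set $\Omega_{ij}=\{\textbf{v}_{ij}^{(1)},\textbf{v}_{ji}^{(2)}:\textbf{v}_{ij}^{(1)}=\textbf{v}_{ji}^{(2)}\}$ the projection is the averaging that gives the two $\textbf{v}_{ij}$-lines. Since $\Omega_u=\mathbb{R}^n$, the $\textbf{u}$-step of \eqref{eq_ADMM} is the unconstrained regularized quadratic minimization written in \eqref{eq_updatelaw}, and the remaining three lines of \eqref{eq_updatelaw} are the standard dual-ascent updates for the equalities (\ref{eq_SMPC_ADMMconsensus}d)--(\ref{eq_SMPC_ADMMconsensus}f); checking that $\textbf{H}_u\textbf{u}+\textbf{H}_v\textbf{v}=\textbf{h}$ reproduces precisely those equalities closes this step.

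Next I would verify the hypotheses of Proposition~\ref{prop_ADMM}. The function $\phi_u(\textbf{u})=\sum_{i\in\mathcal{V}}\hat{\Phi}_i(\hat{\textbf{u}}_i)=\sum_{i\in\mathcal{V}}\Phi_i(\textbf{P}_i\hat{\textbf{u}}_i)$ is convex: by Assumption~\ref{aspt_cost} each $\Phi_i$ is strongly convex and smooth, and precomposition with the linear projection $\textbf{P}_i$ preserves convexity (it is strongly convex only in the $\hat{\textbf{x}}_i$-coordinates, not in the copy coordinates $\hat{\textbf{x}}_{ij}$, which is exactly why the augmented quadratic term in the $\textbf{u}$-step is needed for each $\textbf{u}$-subproblem to be well posed). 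Moreover $\phi_v\equiv 0$ is convex, $\Omega_u=\mathbb{R}^n$, and $\Omega_v$ is a nonempty closed convex set. Finally, by the strict-feasibility Assumption~\ref{aspt_4} the feasible set of \eqref{eq_MPC_matrixform} is nonempty with a Slater point, and since \eqref{eq_SMPC_ADMMconsensus} is equivalent to \eqref{eq_MPC_matrixform} its feasible set is nonempty too, so a primal--dual optimal pair for \eqref{eq_SMPC_ADMMconsensus} exists. These are the conditions Proposition~\ref{prop_ADMM} requires, and it therefore yields $\textbf{u}(s)\to\textbf{u}^{opt}$ for some optimizer $\textbf{u}^{opt}$ of \eqref{eq_SMPC_ADMMconsensus}.

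It remains to close the loop between \eqref{eq_SMPC_ADMMconsensus} and \eqref{eq_MPC_matrixform}. The copy equalities $\textbf{S}_{ij}\hat{\textbf{u}}_i=\textbf{R}_{ji}\hat{\textbf{u}}_j$ enforce $F_{ji}^{copy}(t_0+k)=F_{ji}(t_0+k)$, so the linear map $\hat{\textbf{u}}\mapsto\{\textbf{P}_i\hat{\textbf{u}}_i:i\in\mathcal{V}\}$ restricts to a bijection between the feasible set of \eqref{eq_SMPC_ADMMconsensus} and that of \eqref{eq_MPC_matrixform} which preserves the objective value; hence $\textbf{P}_i\textbf{u}_i^{opt}=\textbf{x}_i^{opt}$. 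Because $\Phi(t_0)$ is strongly convex in $\hat{\textbf{x}}$, the minimizer $\textbf{x}^{opt}$ of \eqref{eq_MPC_matrixform} is unique even though $\textbf{u}^{opt}$ need not be. Combining $\textbf{u}(s)\to\textbf{u}^{opt}$ with continuity of $\textbf{P}_i$ then gives $\lim_{s\to\infty}\textbf{P}_i\textbf{u}_i(s)=\textbf{P}_i\textbf{u}_i^{opt}=\textbf{x}_i^{opt}$, which is the assertion of the theorem.

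I expect the main obstacle to be precisely this loss of strong convexity caused by the copy variables: $\phi_u$ is convex but not strongly convex in the full $\textbf{u}$, and $\textbf{H}_u$ need not have full column rank, so the plain ADMM $\textbf{u}$-subproblem could fail to be uniquely solvable and the textbook ADMM convergence argument would not apply verbatim. The resolution is twofold --- use the proximal ADMM variant so that Proposition~\ref{prop_ADMM} applies as stated, and observe that possible non-uniqueness of $\textbf{u}^{opt}$ is harmless because only the projected iterate $\textbf{P}_i\textbf{u}_i(s)$, whose limit $\textbf{x}_i^{opt}$ is unique, is claimed to converge.
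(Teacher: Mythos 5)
Your proposal is correct and follows essentially the same route as the paper: the paper proves Theorem~\ref{th_updatelaw} simply by observing that \eqref{eq_updatelaw} is the ADMM iteration \eqref{eq_ADMM} applied to the reformulation \eqref{eq_SMPC_ADMMconsensus}, invoking Proposition~\ref{prop_ADMM} for $\textbf{u}_i(s)\to\textbf{u}_i^{opt}$, and using the equivalence of \eqref{eq_SMPC_ADMMconsensus} with \eqref{eq_MPC_matrixform} together with $\hat{\textbf{x}}_i=\textbf{P}_i\hat{\textbf{u}}_i$ to conclude $\textbf{P}_i\textbf{u}_i(s)\to\textbf{x}_i^{opt}$. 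Your write-up is in fact more careful than the paper's (explicit verification of the decoupled projections, of the hypotheses of Proposition~\ref{prop_ADMM}, and of the feasible-set correspondence), but it is the same argument.
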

\section{Numerical simulations}
In this section, we test the effectiveness of our proposed method in solving the stochastic MPC-based energy management problem \eqref{eq_MPC_matrixform}. The tested power network is a modified IEEE 30-bus system as shown in Fig. \ref{fig_test_system}.
\begin{figure}[htb]
\begin{center}
\includegraphics[width=0.325\textwidth, angle=0]{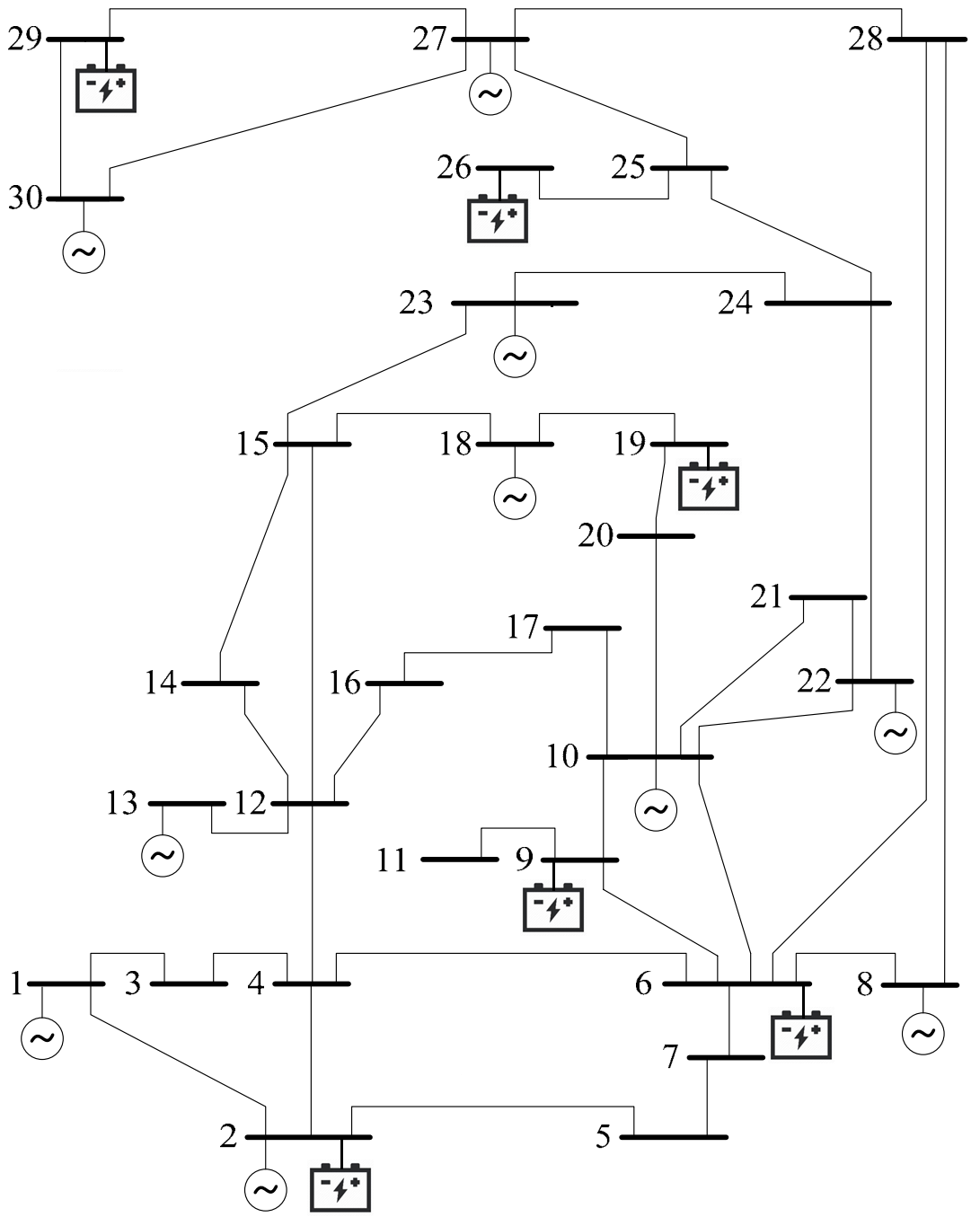}
\caption{Test system with 10 generators.}\label{fig_test_system}
\end{center}
\end{figure}
\begin{table}
\begin{center}
\captionof{table}{Data of power generators.}\label{tbl_generator}
\scalebox{0.75}
{\begin{tabular}{c|c|c|c|c|c|c|c}
\hline\rule{0pt}{12pt} 
Unit&  Bus&  {$\alpha_i$}&  {$\omega_i$}&  {$\overline{P_i}$}&  {$\underline{R_i}$}&  {$\overline{R_i}$} \\[0.5ex] \hline\rule{0pt}{9pt}
G1&  1&  2.0&  110.25&  400 & -80 & 80\\
G2&  2&  1.25&  140.75&  500 & -100 & 100\\
G3&  8&  1.2&  14.00&  1000 & -300 & 300\\
G4&  10&  1.25&  140.00&  500 & -100 & 100\\
G5&  13&  1.0&  100.25&  600 & -120 & 120\\
G6&  18&  1.4&  120.50&  400 & -80 & 80\\
G7&  22&  0.5&  150.60&  1000 & -200 & 200\\
G8&  23&  0.9&  170.00&  800 & -160 & 160\\
G9&  27&  1.6&  130.25&  300 & -80 & 80\\
G10&  30&  0.8&  110.25&  1000 & -150 & 150\\\hline
\end{tabular}}
\end{center}
\end{table}
\begin{table}
\begin{center}
\captionof{table}{Data of energy storage units.}\label{tbl_store}
\scalebox{0.75}
{\begin{tabular}{c|c|c|c|c|c|c|c|c}
\hline\rule{0pt}{12pt}
Unit&  Bus&  {$eff_i$}&  {$c_i$}&  {$d_i$}&  {$\overline{E_i^d}=\overline{E_i^d}$}&  {$\overline{SoC_i}$}&  {$\upsilon_i$}&  {$\varsigma_i$} \\ [0.5ex]\hline\rule{0pt}{9pt}
E1&  2&  0.95&  0.92&  0.94 & 50 & 500 & 0.75 & 0.85\\
E2&  6&  0.98&  0.95&  0.95 & 100 & 1000 & 1.2 & 1.1\\
E3&  10&  0.92&  0.91&  0.96 & 80 & 500 & 1.25 & 1.5\\
E4&  19&  0.94&  0.93&  0.97 & 60 & 800 & 1.1 & 1.0\\
E5&  26&  0.99&  0.94&  0.98 & 50 & 500 & 1.25 & 1.15\\
E6&  29&  0.96&  0.91&  0.92 & 100 & 900 & 1.4 & 1.2\\\hline
\end{tabular}}
\end{center}
\end{table}
\begin{table}
\begin{center}
\captionof{table}{Basic power demand.}\label{tbl_predict}
\scalebox{0.75}
{\begin{tabular}{c|c|c|c|c|c|c|c|c|c|c|c|c|c|c|c|c|c|c}
\hline\rule{0pt}{12pt} 
$k$&  $0$&  $1$&  $2$&  $3$&  $4$&  $5$&  $6$&  $7$&  $8$&  $9$&  $10$&  $11$ \\[0.5ex] \hline\rule{0pt}{9pt}
$D(t_0+k)$&  $50$&  $60$&  $70$&  $80$&  $90$&  $100$&  $100$&  $95$&  $85$&  $80$&  $70$&  $60$\\\hline
\end{tabular}}
\end{center}
\end{table}
The set of buses connecting to power generators is $\mathcal{V}^G = \{1, 2, 8, 10, 13, 18, 22, 23, 27, 30\}$ and the set of buses having energy storage units is $\mathcal{V}^E = \{2, 6, 10, 19, 26, 29\}$. Assume that all cost functions have quadratic forms as follows.
\[\Phi_i^g(P_i) = \frac{\alpha_i}{2} P_i^2 + \omega_i P_i, \forall i \in \mathcal{V}^G,\] \[\Phi_i^e(P_i) = \frac{\upsilon_i}{2} (E_i^c)^2 + \frac{\varsigma_i}{2} (E_i^d)^2, \forall i \in \mathcal{V}^E,\]
\[\Phi_{ij}^f(F_{ij}) = \frac{\varpi_{ij}}{2}F_{ij}^2 + \nu_{ij}F_{ij}\]
Parameters for power generators and energy storage units are given in TABLE. \ref{tbl_generator} and TABLE. \ref{tbl_store}, respectively. For each edge $(i,j) \in \mathcal{E}$, the parameters are chosen randomly where $\varpi_{ij} \in [0.1, 0.3], \nu_{ij} \in [0.5, 1.0], \forall (i,j) \in \mathcal{E}$. We also set randomly the upper bound $\overline{F_{ij}} \in [100, 400]$.
The expectation of the power demand of each agent $i \in \mathcal{V}$ is given by \[E\left[D_i(t_0+k)\right] = D(t_0+k) + 10*[(i+k) \textrm{ mod } 5]\] where $D(t_0+k)$ is given in TABLE. \ref{tbl_predict}. In addition, we assume that $Var\left[D_i(t_0+k)\right] = 10, \forall i \in \mathcal{V}, k \ge 0$. Let the time interval $T = 1$ hour and the parameter $\epsilon = 0.25$ for the chance constraints.
We use MATLAB to implement the dual ascent method \eqref{eq_grad_MPCdetailed} and the ADMM-based method \eqref{eq_updatelaw}. From the simulated model, we compute a Lipschitz gradient constant of the dual function $\Psi(\boldsymbol{\eta})$ as $L_{\Psi} = 100$. Then we choose the penalty parameter $\rho = 0.01$ for the ADMM-based method.
Our experiments are conducted in a computer with chip Intel Core I5 8500 and 16 GB RAM.

Fig. \ref{fig_test_simulation} illustrates the convergence of the estimated solutions under two distributed methods (the dual ascent method \eqref{eq_grad_MPCdetailed} and ADMM-based method \eqref{eq_updatelaw}) to the optimal solution $\textbf{x}^{opt}$ of the stochastic MPC-based energy management problem \eqref{eq_MPC_matrixform} with $K =3$ and $K = 6$. For the the dual ascent method \eqref{eq_grad_MPCdetailed}, the estimated solution is $\textbf{x}^*(s) = \textrm{col}\left\{\textbf{x}_i^*(\boldsymbol{\eta}(s)): i \in \mathcal{V}\right\}$ where $\boldsymbol{\eta}(s)$ is the estimated dual solution at the iteration $s \ge 1$. For the ADMM-based method \eqref{eq_updatelaw}), $\textbf{x}(s) = \textrm{col}\left\{\textbf{P}_i\textbf{u}_i(s): i \in \mathcal{V}\right\}$ where $\textbf{u}_i(s)$ is the estimated solution in (\ref{eq_updatelaw}d).
\begin{figure*}
\begin{center}
\includegraphics[width=0.48\textwidth, angle=0]{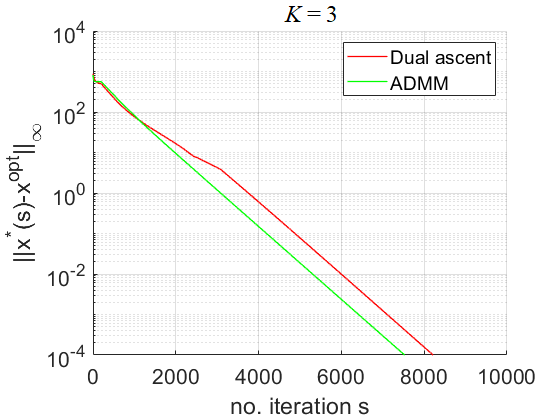}
\includegraphics[width=0.48\textwidth, angle=0]{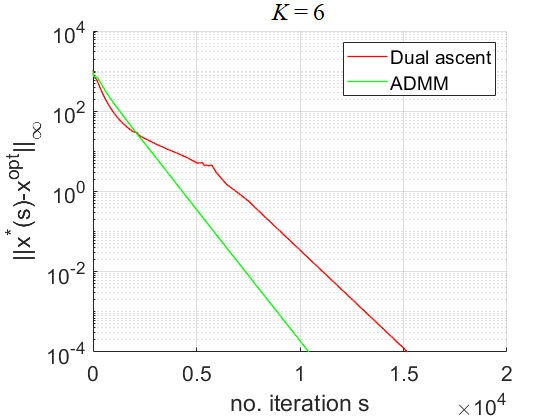}
\caption{Convergences of the estimated solutions to the precise optimal solutions under three distributed optimization methods.}\label{fig_test_simulation}
\end{center}
\end{figure*}
It is easy to observe that the ADMM-based method \eqref{eq_updatelaw}) requires less iterations than the dual ascent method \eqref{eq_grad_MPCdetailed}. However, the duration time to run one iteration of the ADMM-based method \eqref{eq_updatelaw}) is about two times longer than the dual ascent method \eqref{eq_grad_MPCdetailed}.
\section{Conclusion}
In this paper, we design two distributed method for each agent to determine the optimal control decisions for its power distribution (if the power node has generator capacity), the operation of its energy storage unit (if existed), and the power transmitted to its neighbor. Under dual decomposition technique and ADMM algorithm, the asymptotic convergence of both two designed distributed methods is guaranteed. Numerical simulation results in MATLAB verifies the effectiveness of these methods.
\appendix
\subsection{Detailed form of matrices}
\begin{subequations}\label{eq_matrixform_temp1_detailed}
\begin{align}
\textbf{A}_i^g &= \left[\begin{matrix} \textbf{I}_K \\ -\textbf{I}_K\\ \left[\begin{matrix} -\textbf{I}_{K-1} & \textbf{0}_{K-1} \end{matrix}\right] + \left[\begin{matrix} \textbf{0}_{K-1} & \textbf{I}_{K-1} \end{matrix}\right]\\ \left[\begin{matrix} \textbf{I}_{K-1} & \textbf{0}_{K-1} \end{matrix}\right] - \left[\begin{matrix} \textbf{0}_{K-1} & \textbf{I}_{K-1} \end{matrix}\right]
\end{matrix}\right] = \left[\begin{matrix}
1 & 0 & 0 & \cdots & 0 & 0\\ 0 & 1 & 0 & \cdots & 0 & 0\\ \vdots & \vdots & \vdots & \ddots & \vdots & \vdots\\ 0 & 0 & 0 & \cdots & 1 & 0\\ 0 & 0 & 0 & \cdots & 0 & 1\\
-1 & 0 & 0 & \cdots & 0 & 0\\ 0 & -1 & 0 & \cdots & 0 & 0\\ \vdots & \vdots & \vdots & \ddots & \vdots & \vdots\\ 0 & 0 & 0 & \cdots & -1 & 0\\ 0 & 0 & 0 & \cdots & 0 & -1\\
-1 & 1 & 0 & \cdots & 0 & 0\\ 0 & -1 & 1 & \cdots & 0 & 0\\ \vdots & \vdots & \ddots & \ddots & \vdots & \vdots\\ 0 & 0 & 0 & \cdots & 1 & 0\\ 0 & 0 & 0 & \cdots & -1 & 1\\
1 & -1 & 0 & \cdots & 0 & 0\\ 0 & 1 & -1 & \cdots & 0 & 0\\ \vdots & \vdots & \ddots & \ddots & \vdots & \vdots\\ 0 & 0 & 0 & \cdots & -1 & 0\\ 0 & 0 & 0 & \cdots & 1 & -1
\end{matrix}\right],\\
\textbf{a}_i^g &= \left[\begin{matrix}
\min\left\{\tilde{P}_{i,t_0}, P_i(t_0-1) + \overline{R_i}\right\}\\ \tilde{P}_{i,t_0+1}\\ \vdots\\ \tilde{P}_{i,t_0+K-2}\\ \tilde{P}_{i,t_0+K-1}\\
\max\left\{0, P_i(t_0-1) + \underline{R_i}\right\}\\ \textbf{0}_K\\
\overline{R_i}\textbf{1}_K\\
-\underline{R_i}\textbf{1}_K
\end{matrix}\right].
\end{align}
\end{subequations}
\begin{subequations}\label{eq_matrixform_temp2_detailed}
\begin{align}
\textbf{A}_i^e &= \left[\begin{matrix} \left[\begin{matrix} \textbf{I}_{K} & \textbf{0}_{K \times K}\\ \textbf{0}_{K \times K} & \textbf{I}_{K} \end{matrix}\right]\\
\left[\begin{matrix} -\textbf{I}_{K} & \textbf{0}_{K \times K}\\ \textbf{0}_{K \times K} & -\textbf{I}_{K} \end{matrix}\right]\\
\left[\begin{matrix} 1 & 0 & \cdots & 0\\ 0 & 1 + (1 - eff_i) & \cdots & 0\\ \vdots & \vdots & \ddots & \vdots\\ 0 & 0 & \cdots & 1 + \sum\limits_{k = 1}^{K-1}(1 - eff_i)^k \end{matrix}\right] \left[\begin{matrix} c_i\textbf{I}_{K} & -d_i\textbf{I}_{K}\\ -c_i\textbf{I}_{K} & d_i\textbf{I}_{K} \end{matrix}\right]
\end{matrix}\right]\nonumber\\ &= \scalebox{0.85}{$\left[\begin{matrix}
1 & 0 & \cdots & 0 & 0 & 0 & \cdots & 0\\ 0 & 1 & \cdots & 0 & 0 & 0 & \cdots & 0\\ \vdots & \vdots & \ddots & \vdots & \vdots & \vdots & \ddots & \vdots\\ 0 & 0 & \cdots & 1 & 0 & 0 & \cdots & 0\\
0 & 0 & \cdots & 0 & 1 & 0 & \cdots & 0\\ 0 & 0 & \cdots & 0 & 0 & 1 & \cdots & 0\\ \vdots & \vdots & \ddots & \vdots & \vdots & \vdots & \ddots & \vdots\\ 0 & 0 & \cdots & 0 & 0 & 0 & \cdots & 1\\
-1 & 0 & \cdots & 0 & 0 & 0 & \cdots & 0\\ 0 & -1 & \cdots & 0 & 0 & 0 & \cdots & 0\\ \vdots & \vdots & \ddots & \vdots & \vdots & \vdots & \ddots & \vdots\\ 0 & 0 & \cdots & -1 & 0 & 0 & \cdots & 0\\
0 & 0 & \cdots & 0 & -1 & 0 & \cdots & 0\\ 0 & 0 & \cdots & 0 & 0 & -1 & \cdots & 0\\ \vdots & \vdots & \ddots & \vdots & \vdots & \vdots & \ddots & \vdots\\ 0 & 0 & \cdots & 0 & 0 & 0 & \cdots & -1\\
c_i & 0 & \cdots & 0 & -d_i & 0 & \cdots & 0\\ 0 & c_i\left(1 + (1 - eff_i)\right) & \cdots & 0 & 0 & -d_i\left(1 + (1 - eff_i)\right) & \cdots & 0\\ \vdots & \vdots & \ddots & \vdots & \vdots & \vdots & \ddots & \vdots\\ 0 & 0 & \cdots & c_i\left(1 + \sum\limits_{k = 1}^{K-1}(1 - eff_i)^k\right) & 0 & 0 & \cdots & -d_i\left(1 + \sum\limits_{k = 1}^{K-1}(1 - eff_i)^k\right)\\
-c_i & 0 & \cdots & 0 & d_i & 0 & \cdots & 0\\ 0 & -c_i\left(1 + (1 - eff_i)\right) & \cdots & 0 & 0 & d_i\left(1 + (1 - eff_i)\right) & \cdots & 0\\ \vdots & \vdots & \ddots & \vdots & \vdots & \vdots & \ddots & \vdots\\ 0 & 0 & \cdots & -c_i\left(1 + \sum\limits_{k = 1}^{K-1}(1 - eff_i)^k\right) & 0 & 0 & \cdots & d_i\left(1 + \sum\limits_{k = 1}^{K-1}(1 - eff_i)^k\right)
\end{matrix}\right]$},\\
\textbf{a}_i^e &= \left[\begin{matrix}
\overline{E_i^c}\textbf{1}_K\\ \overline{E_i^d}\textbf{1}_K\\
\textbf{0}_{2K}\\
\overline{SoC_i} - (1 - eff_i)SoC_i(t_0-1)\\ \overline{SoC_i} - (1 - eff_i)^2 SoC_i(t_0-1)\\ \vdots\\ \overline{SoC_i} - (1 - eff_i)^{K} SoC_i(t_0-1)\\
- (1 - eff_i)SoC_i(t_0-1)\\ - (1 - eff_i)^2 SoC_i(t_0-1)\\ \vdots\\ - (1 - eff_i)^K SoC_i(t_0-1)
\end{matrix}\right].
\end{align}
\end{subequations}
\begin{equation}\label{eq_matrixform_temp3_detailed}
\textbf{A}_i^f = \left[\begin{matrix} \textbf{I}_K \otimes \textbf{I}_{|\mathcal{N}_i|} \\ -\textbf{I}_K \otimes \textbf{I}_{|\mathcal{N}_i|}\end{matrix}\right],
\textbf{a}_i^f = \left[\begin{matrix}
\textbf{1}_K \otimes \textrm{col}\{\overline{F_{ij}}: j \in \mathcal{N}_i\}\\
\textbf{0}_{K|\mathcal{N}_i|}
\end{matrix}\right].
\end{equation}
\begin{subequations}\label{eq_matrixform_temp4_detailed}
\begin{align}
\textbf{B}_{ii} = \left\{\begin{matrix}
\left[\begin{matrix} -\textbf{I}_K & \textbf{I}_K & -\textbf{I}_K & \textbf{I}_K \otimes \textbf{1}_{|\mathcal{N}_i|}^T\end{matrix}\right], & \textrm{if } i \in \mathcal{V}^G \cap \mathcal{V}^E,\\
\left[\begin{matrix} \textbf{I}_K & \textbf{I}_K \otimes \textbf{1}_{|\mathcal{N}_i|}^T\end{matrix}\right], & \textrm{if } i \in \mathcal{V}^G \backslash \mathcal{V}^E,\\
\left[\begin{matrix} \textbf{I}_K & -\textbf{I}_K & \textbf{I}_K \otimes \textbf{1}_{|\mathcal{N}_i|}^T\end{matrix}\right], & \textrm{if } i \in \mathcal{V}^E \backslash \mathcal{V}^G,\\
\textbf{I}_K \otimes \textbf{1}_{|\mathcal{N}_i|}^T, & \textrm{otherwise}.
\end{matrix}\right.
\end{align}
\end{subequations}
\subsection{Convex optimization}
Consider the nonlinear constrained problem
\begin{equation}\label{eq_ap_problem1}
\begin{split}
\min\limits_{\textbf{x} \in \Omega}&\textrm{ } \phi(\textbf{x})\\
\textrm{s.t. }& g_i(\textbf{x}) \le 0, i = 1,\dots,p
\end{split}
\end{equation}
under convexity and interior point assumptions as follows.
\begin{Assumption}[Assumption 5.3.1 \cite{DimitriPBertsekas1999}]\label{as_ConvexAndInterior}
The set $\Omega$ is a convex subset of $\mathbb{R}^n$ and the functions $\phi:\mathbb{R}^n \rightarrow \mathbb{R}$, $g_j:\mathbb{R}^n \rightarrow \mathbb{R}$ are convex over $\mathcal{X}$.
In addition, there exists a vector $\bar{\textbf{x}} \in \\Omega$ such that $g_i(\bar{\textbf{x}}) < 0, i = 1,\dots,p$.
\end{Assumption}

The Lagrangian function corresponding to the problem \eqref{eq_ap_problem1} is
\[\mathcal{L}(\textbf{x},\boldsymbol{\mu}) = \phi(\textbf{x}) + \boldsymbol{\mu}^T\textbf{g}(\textbf{x})\]
where $\textbf{g}(\textbf{x}) = [g_1(\textbf{x}), g_2(\textbf{x}), \dots, g_p(\textbf{x})]^T$.
Define the dual function by $\varphi(\boldsymbol{\mu}) = \inf_{\textbf{x}\in\mathcal{X}}\mathcal{L}(\textbf{x},\boldsymbol{\mu})$.
The following proposition gives a condition under which the dual function $\varphi(\boldsymbol{\mu})$ is differentiable.
\begin{Proposition}[Proposition 6.1.1 \cite{DimitriPBertsekas1999}]\label{prop_gradientproject}
Let $\phi$ and $\textbf{g}$ be continuous over $\Omega$.
Assume also that $\phi$ is strictly convex, $g_i$'s are linear and for every $\boldsymbol{\mu} \in \mathbb{R}^p$, $\mathcal{L}(\textbf{x}, \boldsymbol{\mu})$ is minimized over $\textbf{x} \in \Omega$ at a unique point $\textbf{x}_{\boldsymbol{\mu}}$.
Then, $\varphi(\boldsymbol{\mu})$ is differentiable and
\[\nabla \varphi(\boldsymbol{\mu}) = \textbf{g}(\textbf{x}_{\boldsymbol{\mu}}).\]
\end{Proposition}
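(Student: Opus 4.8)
The plan is to exploit the concave structure of the dual function and to identify its superdifferential at $\boldsymbol{\mu}$ with the singleton $\{\textbf{g}(\textbf{x}_{\boldsymbol{\mu}})\}$. First I would observe that for each fixed $\textbf{x} \in \Omega$ the map $\boldsymbol{\mu} \mapsto \mathcal{L}(\textbf{x},\boldsymbol{\mu}) = \phi(\textbf{x}) + \boldsymbol{\mu}^T\textbf{g}(\textbf{x})$ is affine in $\boldsymbol{\mu}$; hence $\varphi(\boldsymbol{\mu}) = \inf_{\textbf{x}\in\Omega}\mathcal{L}(\textbf{x},\boldsymbol{\mu})$, being a pointwise infimum of affine functions, is concave. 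Since the hypotheses guarantee that the infimum is attained at the unique point $\textbf{x}_{\boldsymbol{\mu}}$ for every $\boldsymbol{\mu} \in \mathbb{R}^p$, the function $\varphi$ is finite-valued, hence continuous, on $\mathbb{R}^p$. Next I would show that $\textbf{g}(\textbf{x}_{\boldsymbol{\mu}})$ is a supergradient: for any $\boldsymbol{\nu}$, combining $\varphi(\boldsymbol{\nu}) \le \mathcal{L}(\textbf{x}_{\boldsymbol{\mu}},\boldsymbol{\nu})$ with the identity $\varphi(\boldsymbol{\mu}) = \mathcal{L}(\textbf{x}_{\boldsymbol{\mu}},\boldsymbol{\mu})$ yields
\[\varphi(\boldsymbol{\nu}) - \varphi(\boldsymbol{\mu}) \le (\boldsymbol{\nu} - \boldsymbol{\mu})^T\textbf{g}(\textbf{x}_{\boldsymbol{\mu}}).\]

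Since a finite concave function is differentiable at $\boldsymbol{\mu}$ precisely when its one-sided directional derivative $\varphi'(\boldsymbol{\mu};\boldsymbol{\nu})$ is linear in $\boldsymbol{\nu}$, I would compute this derivative and show it equals $\boldsymbol{\nu}^T\textbf{g}(\textbf{x}_{\boldsymbol{\mu}})$. The supergradient inequality above gives at once the upper bound $\varphi'(\boldsymbol{\mu};\boldsymbol{\nu}) \le \boldsymbol{\nu}^T\textbf{g}(\textbf{x}_{\boldsymbol{\mu}})$. For the matching lower bound I would use optimality of $\textbf{x}_{\boldsymbol{\mu}}$ at $\boldsymbol{\mu}$, namely $\mathcal{L}(\textbf{x}_{\boldsymbol{\mu}+t\boldsymbol{\nu}},\boldsymbol{\mu}) \ge \mathcal{L}(\textbf{x}_{\boldsymbol{\mu}},\boldsymbol{\mu})$, to obtain for every $t > 0$
\[\varphi(\boldsymbol{\mu} + t\boldsymbol{\nu}) - \varphi(\boldsymbol{\mu}) \ge \mathcal{L}(\textbf{x}_{\boldsymbol{\mu}+t\boldsymbol{\nu}},\boldsymbol{\mu}+t\boldsymbol{\nu}) - \mathcal{L}(\textbf{x}_{\boldsymbol{\mu}+t\boldsymbol{\nu}},\boldsymbol{\mu}) = t\,\boldsymbol{\nu}^T\textbf{g}(\textbf{x}_{\boldsymbol{\mu}+t\boldsymbol{\nu}}).\]
Dividing by $t$ and letting $t \to 0^+$, invoking continuity of $\boldsymbol{\mu} \mapsto \textbf{x}_{\boldsymbol{\mu}}$ together with continuity of $\textbf{g}$, the right-hand side tends to $\boldsymbol{\nu}^T\textbf{g}(\textbf{x}_{\boldsymbol{\mu}})$, so $\varphi'(\boldsymbol{\mu};\boldsymbol{\nu}) \ge \boldsymbol{\nu}^T\textbf{g}(\textbf{x}_{\boldsymbol{\mu}})$. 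Combining the two bounds gives $\varphi'(\boldsymbol{\mu};\boldsymbol{\nu}) = \boldsymbol{\nu}^T\textbf{g}(\textbf{x}_{\boldsymbol{\mu}})$, which is linear in $\boldsymbol{\nu}$; hence $\varphi$ is differentiable at $\boldsymbol{\mu}$ with $\nabla\varphi(\boldsymbol{\mu}) = \textbf{g}(\textbf{x}_{\boldsymbol{\mu}})$.

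The step I expect to be the main obstacle is justifying continuity of the minimizer map $\boldsymbol{\mu} \mapsto \textbf{x}_{\boldsymbol{\mu}}$, on which the lower-bound limit rests. The plan is to argue by compactness: given $\boldsymbol{\mu}_k \to \boldsymbol{\mu}$, I would show that the corresponding minimizers $\{\textbf{x}_{\boldsymbol{\mu}_k}\}$ stay in a compact set, then pass to the limit in the optimality inequalities using continuity of $\phi$ and $\textbf{g}$ to conclude that any limit point $\bar{\textbf{x}}$ minimizes $\mathcal{L}(\cdot,\boldsymbol{\mu})$ over $\Omega$, whence uniqueness forces $\bar{\textbf{x}} = \textbf{x}_{\boldsymbol{\mu}}$. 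The delicate technical point is securing that compactness; it follows from the well-posedness of the family of minimization problems (closedness of $\Omega$ together with the attainment and uniqueness hypotheses), while strict convexity of $\phi$ and linearity of the $g_i$ make $\mathcal{L}(\cdot,\boldsymbol{\mu})$ strictly convex, which is exactly what underwrites uniqueness and hence the singleton superdifferential. In the application to the dual function $\Psi$ of \eqref{eq_MPC_matrixform} this continuity is transparent, since each local Lagrangian $\mathcal{L}_i(\cdot,\hat{\boldsymbol{\eta}})$ is strongly convex and smooth, so its minimizer $\textbf{x}_i^*(\hat{\boldsymbol{\eta}})$ in \eqref{eq_sup_optimalsolution} is obtained from the first-order stationarity condition and depends continuously on the dual variables.
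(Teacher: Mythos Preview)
The paper does not prove this proposition; it is quoted verbatim as Proposition~6.1.1 from Bertsekas' \emph{Nonlinear Programming} and invoked without argument to obtain Lemma~\ref{lm_grad_dualfunction}. There is therefore no in-paper proof to compare against.

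Your argument is the standard Danskin-type route and is essentially sound: concavity of $\varphi$ as an infimum of affine functions, the supergradient inequality for $\textbf{g}(\textbf{x}_{\boldsymbol{\mu}})$, and the two-sided bound on the directional derivative are all correct. You also correctly isolate the one nontrivial step, continuity of $\boldsymbol{\mu}\mapsto\textbf{x}_{\boldsymbol{\mu}}$. However, your justification of that continuity is where the only real gap lies. You appeal to ``closedness of $\Omega$ together with the attainment and uniqueness hypotheses'' to obtain compactness of $\{\textbf{x}_{\boldsymbol{\mu}_k}\}$, but closedness of $\Omega$ is not among the stated hypotheses (Assumption~\ref{as_ConvexAndInterior} gives only convexity), and more importantly, pointwise attainment and uniqueness for each $\boldsymbol{\mu}$ do not by themselves force the minimizers to remain bounded along a sequence $\boldsymbol{\mu}_k\to\boldsymbol{\mu}$. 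One clean way to close this is to argue that a strictly convex $\mathcal{L}(\cdot,\boldsymbol{\mu})$ which attains its infimum has bounded sublevel sets, and then use local uniform closeness of $\mathcal{L}(\cdot,\boldsymbol{\mu}_k)$ to $\mathcal{L}(\cdot,\boldsymbol{\mu})$ (linear in $\boldsymbol{\mu}$, with $\textbf{g}$ continuous) to trap $\textbf{x}_{\boldsymbol{\mu}_k}$ in a fixed sublevel set of $\mathcal{L}(\cdot,\boldsymbol{\mu})$ for $k$ large; but this step should be written out, not asserted. Your final remark that in the paper's actual setting the local Lagrangians $\mathcal{L}_i$ are strongly convex and smooth, so continuity of $\textbf{x}_i^*(\hat{\boldsymbol{\eta}})$ is immediate from the stationarity equation, is exactly the right observation for the application at hand.
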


The associated dual problem of \eqref{eq_ap_problem1} is given as:
\begin{equation}\label{eq_ap_dualproblem}
\max\limits_{\boldsymbol{\mu} \ge \textbf{0}}\varphi(\boldsymbol{\mu}).
\end{equation}
Let $\phi^*$ and $\varphi^*$ be the optimal values of the primal problem \eqref{eq_ap_problem1} and the dual problem \eqref{eq_ap_dualproblem}, respectively.
From the definition, we have $\varphi^* \le \phi^*$ in general.
Moreover, if the primal problem \eqref{eq_ap_problem1} satisfies the conditions stated in Assumption \ref{as_ConvexAndInterior}, the existence of Lagrange multiplier and the optimal solution of the problem \eqref{eq_ap_problem1} is guaranteed by the following propositions.
\begin{Proposition}[Proposition 5.3.1 \cite{DimitriPBertsekas1999}]\label{prop_strongdual}
Let Assumption \ref{as_ConvexAndInterior} hold for the problem \eqref{eq_ap_problem1}.
Then $\phi^* = \varphi^*$ and there exists at least one Lagrange multiplier $\boldsymbol{\mu}^*$ where 
\[\boldsymbol{\mu}^* \ge \textbf{0} \textrm{ and } \phi^* = \inf\limits_{\textbf{x}\in\mathcal{X}}\mathcal{L}(\textbf{x},\boldsymbol{\mu}^*)\]
\end{Proposition}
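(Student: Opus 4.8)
The plan is to establish the zero duality gap and the existence of a Lagrange multiplier by a geometric separating-hyperplane argument applied to the perturbation set of the problem \eqref{eq_ap_problem1}. First I would introduce
\[
\mathcal{A} = \bigl\{(\textbf{u}, w) \in \mathbb{R}^p \times \mathbb{R} : \exists\, \textbf{x} \in \Omega \textrm{ with } \textbf{g}(\textbf{x}) \le \textbf{u} \textrm{ and } \phi(\textbf{x}) \le w\bigr\}.
\]
Under Assumption \ref{as_ConvexAndInterior} (convexity of $\Omega$, of $\phi$, and of each $g_i$) this set is convex: given $(\textbf{u}^1, w^1), (\textbf{u}^2, w^2) \in \mathcal{A}$ with witnesses $\textbf{x}^1, \textbf{x}^2 \in \Omega$, the point $\theta\textbf{x}^1 + (1-\theta)\textbf{x}^2 \in \Omega$ witnesses the corresponding convex combination of the pairs, thanks to convexity of $\phi$ and of the components of $\textbf{g}$. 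By construction $\mathcal{A}$ is upward closed in every coordinate of $\textbf{u}$ and in $w$, and the primal optimal value satisfies $\phi^* = \inf\{w : (\textbf{0}, w) \in \mathcal{A}\}$.

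The key step is to certify that $(\textbf{0}, \phi^*)$ is not an interior point of $\mathcal{A}$, so that the supporting hyperplane theorem applies. This holds because for any $w < \phi^*$ the point $(\textbf{0}, w)$ lies outside $\mathcal{A}$ (no feasible $\textbf{x}$ attains objective below $\phi^*$), so every neighbourhood of $(\textbf{0}, \phi^*)$ meets the complement of $\mathcal{A}$. I would then obtain a nonzero vector $(\boldsymbol{\mu}, \beta)$ with
\[
\boldsymbol{\mu}^T\textbf{u} + \beta w \ge \beta \phi^*, \quad \forall (\textbf{u}, w) \in \mathcal{A}.
\]
Letting any coordinate of $\textbf{u}$, or $w$, tend to $+\infty$ while keeping the witness fixed (legitimate by upward closedness) forces $\boldsymbol{\mu} \ge \textbf{0}$ and $\beta \ge 0$; otherwise the left-hand side would be driven below the bound.

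The decisive use of the interior-point condition is to exclude the degenerate vertical hyperplane $\beta = 0$. If $\beta = 0$, the inequality reduces to $\boldsymbol{\mu}^T\textbf{u} \ge 0$ on $\mathcal{A}$; evaluating at the Slater witness, where $(\textbf{g}(\bar{\textbf{x}}), \phi(\bar{\textbf{x}})) \in \mathcal{A}$ with $\textbf{g}(\bar{\textbf{x}}) < \textbf{0}$, gives $\boldsymbol{\mu}^T\textbf{g}(\bar{\textbf{x}}) \ge 0$, which is impossible since $\boldsymbol{\mu} \ge \textbf{0}$, $\boldsymbol{\mu} \ne \textbf{0}$ (as $\beta = 0$ forces $\boldsymbol{\mu} \ne \textbf{0}$) and $\textbf{g}(\bar{\textbf{x}}) < \textbf{0}$ componentwise. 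Hence $\beta > 0$, and I would set $\boldsymbol{\mu}^* = \boldsymbol{\mu}/\beta \ge \textbf{0}$. Dividing the separating inequality by $\beta$ and inserting the witness $(\textbf{g}(\textbf{x}), \phi(\textbf{x}))$ for arbitrary $\textbf{x} \in \Omega$ yields $\phi(\textbf{x}) + (\boldsymbol{\mu}^*)^T\textbf{g}(\textbf{x}) \ge \phi^*$ for all $\textbf{x} \in \Omega$, i.e. $\varphi(\boldsymbol{\mu}^*) = \inf_{\textbf{x}\in\Omega}\mathcal{L}(\textbf{x}, \boldsymbol{\mu}^*) \ge \phi^*$. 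Combining with weak duality $\varphi(\boldsymbol{\mu}^*) \le \varphi^* \le \phi^*$ sandwiches all quantities into equality, giving $\phi^* = \varphi^*$ and confirming that $\boldsymbol{\mu}^* \ge \textbf{0}$ is a Lagrange multiplier with $\phi^* = \inf_{\textbf{x}\in\Omega}\mathcal{L}(\textbf{x}, \boldsymbol{\mu}^*)$. The main obstacle is the careful justification that $(\textbf{0}, \phi^*)$ is a non-interior (boundary) point of the convex set $\mathcal{A}$, so that the supporting hyperplane theorem is legitimately invoked, together with the sign and degeneracy bookkeeping on $(\boldsymbol{\mu}, \beta)$ that the Slater condition resolves.
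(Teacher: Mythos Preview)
Your argument is correct and is essentially the standard separating-hyperplane proof of strong duality under the Slater condition. Note, however, that the paper does not actually supply a proof of this proposition: it is quoted verbatim from Bertsekas's textbook \cite{DimitriPBertsekas1999} (Proposition~5.3.1) and used as a black box in the appendix, so there is no in-paper argument to compare against. The proof you outline is in fact the one given in that reference, so your proposal aligns with the cited source.
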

\begin{Proposition}[Proposition 5.1.1 \cite{DimitriPBertsekas1999}]\label{prop_optimalsolution}
Let $\boldsymbol{\mu}^*$ be a Lagrange multiplier having the properties given in Proposition \ref{prob_strongdual}. 
Then $\textbf{x}^*$ is a global minimum of the primal problem if and only if $\textbf{x}^*$ is feasible and
\begin{subequations}\label{eq_optprimal_dual}
\begin{gather}
\textbf{x}^* = \arg\min\limits_{\textbf{x}\in\mathcal{X}}\mathcal{L}(\textbf{x}, \boldsymbol{\mu}^*)\\
\mu_j^*g_j(\textbf{x}^*) = 0, j = 1,\dots,p
\end{gather}
\end{subequations}
\end{Proposition}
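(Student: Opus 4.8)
The plan is to obtain both implications from the two properties that Proposition \ref{prop_strongdual} attaches to $\boldsymbol{\mu}^*$: the strong-duality identity $\phi^* = \varphi^*$ together with $\varphi(\boldsymbol{\mu}^*) = \inf_{\textbf{x}\in\mathcal{X}}\mathcal{L}(\textbf{x},\boldsymbol{\mu}^*) = \phi^*$ and $\boldsymbol{\mu}^* \ge \textbf{0}$. Everything then follows from a single chain of (in)equalities read in the two directions, with the only nonroutine point being the extraction of per-constraint complementary slackness.

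For sufficiency, I would assume $\textbf{x}^*$ is feasible and satisfies \eqref{eq_optprimal_dual}. Condition (\ref{eq_optprimal_dual}b) gives $\boldsymbol{\mu}^{*T}\textbf{g}(\textbf{x}^*) = \sum_{j=1}^p \mu_j^* g_j(\textbf{x}^*) = 0$, so that $\mathcal{L}(\textbf{x}^*,\boldsymbol{\mu}^*) = \phi(\textbf{x}^*)$; and condition (\ref{eq_optprimal_dual}a) says $\textbf{x}^*$ minimizes the Lagrangian, so $\mathcal{L}(\textbf{x}^*,\boldsymbol{\mu}^*) = \varphi(\boldsymbol{\mu}^*)$. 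Combining these with the property of $\boldsymbol{\mu}^*$,
\[
\phi(\textbf{x}^*) = \mathcal{L}(\textbf{x}^*,\boldsymbol{\mu}^*) = \varphi(\boldsymbol{\mu}^*) = \phi^*,
\]
so the feasible point $\textbf{x}^*$ attains the primal optimum and is therefore a global minimum.

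For necessity, I would take $\textbf{x}^*$ to be a global minimum, so it is feasible with $\phi(\textbf{x}^*) = \phi^*$. Feasibility gives $g_j(\textbf{x}^*) \le 0$, and since $\boldsymbol{\mu}^* \ge \textbf{0}$ each term $\mu_j^* g_j(\textbf{x}^*)$ is nonpositive, hence $\boldsymbol{\mu}^{*T}\textbf{g}(\textbf{x}^*) \le 0$. Using $\varphi(\boldsymbol{\mu}^*) = \inf_{\textbf{x}\in\mathcal{X}}\mathcal{L}(\textbf{x},\boldsymbol{\mu}^*) \le \mathcal{L}(\textbf{x}^*,\boldsymbol{\mu}^*)$ one obtains
\[
\phi^* = \varphi(\boldsymbol{\mu}^*) \le \mathcal{L}(\textbf{x}^*,\boldsymbol{\mu}^*) = \phi(\textbf{x}^*) + \boldsymbol{\mu}^{*T}\textbf{g}(\textbf{x}^*) \le \phi(\textbf{x}^*) = \phi^*.
\]
Because the two ends coincide, every inequality is in fact an equality; the equality $\varphi(\boldsymbol{\mu}^*) = \mathcal{L}(\textbf{x}^*,\boldsymbol{\mu}^*)$ says that $\textbf{x}^*$ attains the infimum defining $\varphi$, which is exactly (\ref{eq_optprimal_dual}a).

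The remaining step, and the one I expect to require the most care, is recovering the per-constraint complementary slackness (\ref{eq_optprimal_dual}b) from the global identity $\boldsymbol{\mu}^{*T}\textbf{g}(\textbf{x}^*) = 0$ that the equality chain forces. The argument is that a sum of nonpositive terms vanishes only if each term vanishes: since $\mu_j^* \ge 0$ and $g_j(\textbf{x}^*) \le 0$ give $\mu_j^* g_j(\textbf{x}^*) \le 0$ for every $j$, while $\sum_{j=1}^p \mu_j^* g_j(\textbf{x}^*) = 0$, each summand must equal zero, yielding $\mu_j^* g_j(\textbf{x}^*) = 0$ for $j = 1,\dots,p$. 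This closes the necessity direction and hence the equivalence. I note that convexity (Assumption \ref{as_ConvexAndInterior}) is used only indirectly, through Proposition \ref{prop_strongdual}, to furnish $\phi^* = \varphi^*$ and a multiplier $\boldsymbol{\mu}^*$ meeting these hypotheses; the remainder of the proof is purely algebraic and needs no further structure.
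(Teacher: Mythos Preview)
Your argument is correct and is the standard proof of this classical result. Note, however, that the paper does not supply its own proof: it merely quotes Proposition~5.1.1 of \cite{DimitriPBertsekas1999} as a known fact in the appendix, so there is nothing in the paper to compare against beyond observing that your derivation is precisely the textbook argument found in the cited reference.
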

\subsection{Alternating Direction Method of Multipliers (ADMM)}\label{subADMM}
Consider the following convex optimization problem
\begin{equation}\label{eq_ADMMproblem}
\min\limits_{\textbf{u} \in \Omega_{u}, \textbf{v} \in \Omega_{v}} \textrm{ }\phi_u(\textbf{u}) + \phi_v(\textbf{v}) \textrm{ s.t. } \textbf{H}_u\textbf{u} + \textbf{H}_v\textbf{v} = \textbf{h}.
\end{equation}
where $\phi_u(\textbf{u}), \phi_v(\textbf{v})$ are convex functions, $\Omega_{u}, \Omega_{v}$ are convex sets, $\textbf{H}_u, \textbf{H}_v$ and $\textbf{h}$ are known matrices and vector.
The Lagrangian function of the problem \eqref{eq_ADMMproblem} is given by $\mathcal{L} = \phi_u(\textbf{u}) + \phi_v(\textbf{v}) + \boldsymbol{\lambda}^T\left(\textbf{H}_u\textbf{u} + \textbf{H}_v\textbf{v} - \textbf{h}\right)$ where $\boldsymbol{\lambda}$ is the dual variable associated with the equality constraint.
The alternating direction method of multipliers (ADMM) algorithm \cite{StephenBoyd2011} for solving \eqref{eq_ADMMproblem} is given by the iteration update \eqref{eq_ADMM} as follows.
\begin{subequations}\label{eq_ADMM}
\begin{align}
\textbf{v}(s+1) = \arg\min\limits_{\textbf{v} \in \Omega_{v}} \Bigl\{& \phi_v(\textbf{v}) + \frac{\rho}{2}\left|\left|\textbf{H}_u\textbf{u}(s) + \textbf{H}_v\textbf{v} - \textbf{h} - \frac{1}{\rho}\boldsymbol{\lambda}(s)\right|\right|^2 \Bigr\},\\
\textbf{u}(s+1) = \arg\min\limits_{\textbf{u} \in \Omega_{u}} \Bigl\{& \phi_u(\textbf{u}) + \frac{\rho}{2}\left|\left|\textbf{H}_u\textbf{u} + \textbf{H}_v\textbf{v}(s+1) - \textbf{h} - \frac{1}{\rho}\boldsymbol{\lambda}(s)\right|\right|^2 \Bigr\},\\
\boldsymbol{\lambda}(s+1) = \boldsymbol{\lambda}(s) - \rho \Bigl(& \textbf{H}_u\textbf{u}(s+1) + \textbf{H}_v\textbf{v}(s+1) - \textbf{h} \Bigr),
\end{align}
\end{subequations}
where $\rho > 0$ is a penalty parameter.
The following proposition summarizes some convergence results of the ADMM algorithm \cite{StephenBoyd2011}.
\begin{Proposition}\label{prop_ADMM}
As $s \rightarrow \infty$, we have
\begin{enumerate}
\item $\left|\left|\textbf{u}(s) - \textbf{u}^{opt}\right|\right| \rightarrow 0$, where $(\textbf{u}^{opt}, \textbf{v}^{opt}, \boldsymbol{\lambda}^{opt})$ corresponds to one optimal point of the convex optimization problem \eqref{eq_ADMMproblem}.
\item $\left|\left|\textbf{H}_u\textbf{u}(s) + \textbf{H}_v\textbf{v}(s) = \textbf{h}\right|\right| \rightarrow 0$
\item $\phi_u(\textbf{u}(s)) + \phi_v(\textbf{v}(s)) \rightarrow \phi^{opt}$, where $\phi^{opt}$ is the optimal cost value of the convex optimization problem \eqref{eq_ADMMproblem}.
\end{enumerate}
\end{Proposition}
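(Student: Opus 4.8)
The plan is to reproduce the classical Lyapunov (monotone-operator) argument for ADMM from \cite{StephenBoyd2011}. Throughout I assume, as is standard, that $\phi_u$ and $\phi_v$ are closed proper convex and that the unaugmented Lagrangian $\mathcal{L}_0(\textbf{u},\textbf{v},\boldsymbol{\lambda}) = \phi_u(\textbf{u}) + \phi_v(\textbf{v}) + \boldsymbol{\lambda}^T(\textbf{H}_u\textbf{u} + \textbf{H}_v\textbf{v} - \textbf{h})$ admits a saddle point $(\textbf{u}^{opt}, \textbf{v}^{opt}, \boldsymbol{\lambda}^{opt})$; I take this as a standing assumption (it holds, e.g., under the strong-duality conditions of Proposition \ref{prop_strongdual}). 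Write the primal residual $\textbf{r}(s) = \textbf{H}_u\textbf{u}(s) + \textbf{H}_v\textbf{v}(s) - \textbf{h}$ and introduce the candidate Lyapunov function
\begin{equation*}
V(s) = \frac{1}{\rho}\|\boldsymbol{\lambda}(s) - \boldsymbol{\lambda}^{opt}\|^2 + \rho\|\textbf{H}_v(\textbf{v}(s) - \textbf{v}^{opt})\|^2.
\end{equation*}

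First I would establish two inequalities bounding the objective suboptimality $\phi_u(\textbf{u}(s+1)) + \phi_v(\textbf{v}(s+1)) - \phi^{opt}$. The lower bound comes directly from the saddle-point inequality $\mathcal{L}_0(\textbf{u}^{opt},\textbf{v}^{opt},\boldsymbol{\lambda}^{opt}) \le \mathcal{L}_0(\textbf{u}(s+1),\textbf{v}(s+1),\boldsymbol{\lambda}^{opt})$ together with $\textbf{H}_u\textbf{u}^{opt} + \textbf{H}_v\textbf{v}^{opt} = \textbf{h}$, giving $\phi^{opt} \le \phi_u(\textbf{u}(s+1)) + \phi_v(\textbf{v}(s+1)) + (\boldsymbol{\lambda}^{opt})^T\textbf{r}(s+1)$. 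The upper bound is obtained by writing the first-order optimality (variational) conditions of the $\textbf{v}$- and $\textbf{u}$-minimization steps \eqref{eq_ADMM}, using the dual update $\boldsymbol{\lambda}(s+1) = \boldsymbol{\lambda}(s) - \rho\,\textbf{r}(s+1)$ to eliminate the multiplier, and invoking convexity of $\phi_u,\phi_v$; this expresses the same suboptimality in terms of $\textbf{r}(s+1)$ and the successive difference $\textbf{H}_v(\textbf{v}(s+1) - \textbf{v}(s))$.

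Adding the two inequalities and completing the square (using $\boldsymbol{\lambda}(s+1) - \boldsymbol{\lambda}(s) = -\rho\,\textbf{r}(s+1)$) gives the key descent estimate
\begin{equation*}
V(s) - V(s+1) \ge \rho\|\textbf{r}(s+1)\|^2 + \rho\|\textbf{H}_v(\textbf{v}(s+1) - \textbf{v}(s))\|^2 \ge 0.
\end{equation*}
Monotonicity of $V$ shows the sequence is bounded, and telescoping the estimate yields $\sum_{s \ge 0}\bigl(\|\textbf{r}(s+1)\|^2 + \|\textbf{H}_v(\textbf{v}(s+1)-\textbf{v}(s))\|^2\bigr) \le V(0)/\rho < \infty$. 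Hence $\textbf{r}(s) \to \textbf{0}$, which is statement 2, and $\textbf{H}_v(\textbf{v}(s+1) - \textbf{v}(s)) \to \textbf{0}$. Feeding $\textbf{r}(s) \to \textbf{0}$ together with the boundedness of $\boldsymbol{\lambda}(s)$ and of $\textbf{H}_v(\textbf{v}(s) - \textbf{v}^{opt})$ back into the two bounding inequalities squeezes the objective between matching limits, giving $\phi_u(\textbf{u}(s)) + \phi_v(\textbf{v}(s)) \to \phi^{opt}$, which is statement 3.

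The main obstacle is statement 1, convergence of the primal iterate itself: the descent estimate controls only $\boldsymbol{\lambda}(s)$ and $\textbf{H}_v\textbf{v}(s)$, and for a merely convex $\phi_u$ the iterate $\textbf{u}(s)$ need not converge. I would close this gap by exploiting the structure of \eqref{eq_SMPC_ADMMconsensus}, where $\phi_u(\textbf{u}) = \sum_{i \in \mathcal{V}}\hat{\Phi}_i(\hat{\textbf{u}}_i)$ is strongly convex by Assumption \ref{aspt_cost}. Strong convexity upgrades the objective-gap convergence of statement 3 to iterate convergence: the value $\phi^{opt}$ is attained at the unique feasible minimizer $\textbf{u}^{opt}$, the iterates are asymptotically feasible by statement 2, so any limit point of $\{\textbf{u}(s)\}$ must coincide with that unique minimizer, whence $\|\textbf{u}(s) - \textbf{u}^{opt}\| \to 0$. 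I would therefore first prove statements 2 and 3 in full generality via the Lyapunov estimate, and then upgrade to statement 1 using strong convexity and uniqueness of the optimum.
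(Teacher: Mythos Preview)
The paper does not actually prove this proposition: it is stated in the appendix as a summary of known ADMM convergence results, with the sentence ``The following proposition summarizes some convergence results of the ADMM algorithm \cite{StephenBoyd2011}'' serving in lieu of any argument. So there is nothing to compare against on the paper's side; your proposal already goes well beyond what the paper offers, and the Lyapunov descent argument you outline for statements~2 and~3 is precisely the standard proof from the cited reference.

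One genuine gap worth flagging in your treatment of statement~1: you propose to close it by invoking strong convexity of $\phi_u(\textbf{u}) = \sum_{i \in \mathcal{V}}\hat{\Phi}_i(\hat{\textbf{u}}_i)$ via Assumption~\ref{aspt_cost}. But in the specific instantiation \eqref{eq_SMPC_ADMMconsensus} the local variable is $\hat{\textbf{u}}_i = \textrm{col}\{\hat{\textbf{x}}_i, \textrm{col}\{\hat{\textbf{x}}_{ij}: j \in \mathcal{N}_i\}\}$ while the cost satisfies $\hat{\Phi}_i(\hat{\textbf{u}}_i) = \Phi_i(\hat{\textbf{x}}_i)$, i.e.\ it is constant in the copy coordinates $\hat{\textbf{x}}_{ij}$. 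Hence $\phi_u$ is convex but \emph{not} strongly convex in the full vector $\textbf{u}$, and the uniqueness-of-minimizer argument you sketch does not apply directly. A workable repair is to observe that in this instance $\textbf{H}_v = -\textbf{I}$ and $\textbf{H}_u$ has full column rank (the blocks $\textbf{S}_{ij}$ extract exactly the copy coordinates, so every component of $\hat{\textbf{u}}_i$ appears in $\textbf{H}_i\hat{\textbf{u}}_i$); then boundedness of $\textbf{H}_v\textbf{v}(s)$ together with $\textbf{r}(s)\to\textbf{0}$ gives boundedness of $\textbf{H}_u\textbf{u}(s)$ and hence of $\textbf{u}(s)$, after which a cluster-point argument using the optimality conditions of the subproblems yields iterate convergence. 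Alternatively, one can simply note that the paper itself treats statement~1 as a quoted result rather than something to be proved.
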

\bibliographystyle{IEEEtran}
\bibliography{mylib}

\end{document}